\newtheorem{thm}{Theorem}
\newtheorem{cor}[thm]{Corollary}
\newtheorem{lem}[thm]{Lemma}
\newtheorem{prop}[thm]{Proposition}
\newtheorem{claim}[thm]{Claim}
\newtheorem{fact}[thm]{Fact}
\newtheorem{problem}{Problem}
\newtheorem{defn}[thm]{Definition}
\theoremstyle{definition}
\newcommand{\rr}{\mathbb{R}}
\newcommand{\nn}{\mathbb{N}}
\newcommand{\ee}{\varepsilon}
\newcommand{\sbs}{\mathrm{SB}}
\newcommand{\ncn}{\mathrm{NC}}
\newcommand{\nell}{\mathrm{NC}_{\ell_1}}
\newcommand{\aaa}{\mathcal{A}}
\newcommand{\bbb}{\mathcal{B}}
\newcommand{\ccc}{\mathcal{C}}
\newcommand{\ddd}{\mathcal{D}}
\newcommand{\eee}{\mathcal{E}}
\newcommand{\sss}{\mathcal{S}}
\newcommand{\xxx}{\mathcal{X}}
\newcommand{\tr}{\mathrm{Tr}}
\newcommand{\wf}{\mathrm{WF}}
\newcommand{\chains}{\mathrm{chains}}
\newcommand{\ct}{2^{<\nn}}
\newcommand{\ltr}{\Lambda^{<\nn}}
\newcommand{\sg}{\sigma}
\newcommand{\con}{\smallfrown}
\newcommand{\supp}{\mathrm{supp}}
\newcommand{\range}{\mathrm{range}}
\begin{document}

\title{Quotients of Banach spaces and surjectively universal spaces}
\author{Pandelis Dodos}

\address{Department of Mathematics, University of Athens, Panepistimiopolis 157 84, Athens, Greece.}
\email{pdodos@math.ntua.gr}

\thanks{2000 \textit{Mathematics Subject Classification}: Primary 46B03; Secondary 03E15, 05D10.}
\thanks{\textit{Key words}: quotients of Banach spaces, Schauder bases, universal spaces.}
\thanks{Research supported by NSF grant DMS-0903558.}

\maketitle

%------------------------Abstract-------------------------------%

\begin{abstract}
We characterize those classes $\ccc$ of separable Banach spaces for
which there exists a separable Banach space $Y$ not containing $\ell_1$
and such that every space in the class $\ccc$ is a quotient of $Y$.
\end{abstract}

%--------------------------Introduction-----------------------%

\section{Introduction}

There are two classical universality results in Banach Space Theory.
The first one, known to Stefan Banach \cite{Ba}, asserts that the space
$C(2^\nn)$, where $2^\nn$ stands for the Cantor set, is isometrically
universal for all separable Banach spaces; that is, every separable
Banach space is isometric to a subspace of $C(2^\nn)$. The second result,
also known to Banach, is ``dual" to the previous one and asserts that
every separable Banach space is isometric to a quotient of $\ell_1$.

By now, it is well understood that there are natural classes of separable
Banach spaces for which one cannot get something better from what it is
quoted above (see \cite{Ar,Bou,GK,Sz}). For instance, if a separable Banach
space $Y$ is universal for the separable reflexive Banach spaces, then $Y$
must contain an isomorphic copy of $C(2^\nn)$, and so, it is universal for
all separable Banach spaces. However, there are non-trivial classes of
separable Banach spaces which do admit ``smaller" universal spaces (see
\cite{AD,D2,DF,DL,FOSZ,OS,OSZ,Pr}).

Recently, in \cite{D2}, a characterization was obtained of those classes of separable
Banach spaces admitting a universal space which is not universal for all
separable Banach spaces. One of the goals of the present paper is to obtain
the corresponding characterization for the ``dual" problem concerning
quotients instead of embeddings. To proceed with our discussion it is
useful to introduce the following definition.
\begin{defn} \label{d1}
We say that a Banach space $Y$ is a \emph{surjectively universal space}
for a class $\ccc$ of Banach spaces if every space in the class $\ccc$
is a quotient\footnote[1]{If $X$ and $Y$ are Banach spaces, then we say
that $X$ is a \textit{quotient} of $Y$ if there exists a bounded, linear
and onto operator $Q:Y\to X$.} of $Y$.
\end{defn}
We can now state the main problem addressed in this paper.
\begin{enumerate}
\item[\textbf{(P)}] Let $\ccc$ be a class of separable Banach space. When
can we find a separable Banach space $Y$ which is surjectively universal
for the class $\ccc$ and does not contain a copy of $\ell_1$?
\end{enumerate}
We notice that if a separable Banach space $Y$ does not contain a copy
of $\ell_1$, then $\ell_1$ is not a quotient of $Y$ (see
\cite[Proposition 2.f.7]{LT}) and therefore $Y$ is not surjectively
universal for all separable Banach spaces.

To state our results we recall the following (more or less standard)
notation and terminology. By $\sbs$ we denote the standard Borel space
of separable Banach spaces defined by B. Bossard \cite{Bos2}, by $\nell$
we denote the subset of $\sbs$ consisting of all $X\in\sbs$ not containing
an isomorphic copy of $\ell_1$ and, finally, by $\phi_{\nell}$ we denote
Bourgain's $\ell_1$ index \cite{Bou} (these concepts are properly defined
in \S 2). We show the following.
\begin{thm} \label{t2}
Let $\ccc\subseteq \sbs$. Then the following are equivalent.
\begin{enumerate}
\item[(i)] There exists a separable Banach space $Y$ which is surjectively
universal for the class $\ccc$ and does not contain a copy of $\ell_1$.
\item[(ii)] We have $\sup\{\phi_{\nell}(X):X\in\ccc\}<\omega_1$.
\item[(iii)] There exists an analytic subset $\aaa$ of $\nell$ with
$\ccc\subseteq \aaa$.
\end{enumerate}
\end{thm}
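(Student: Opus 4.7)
The implications (i)$\Rightarrow$(iii) and (iii)$\Rightarrow$(ii) should be relatively routine. For (i)$\Rightarrow$(iii), set $\aaa:=\{X\in\sbs:X\text{ is isomorphic to a quotient of }Y\}$; using a Borel parametrization of bounded linear operators $Y\to X$ uniformly in $X\in\sbs$, the relation ``there exists a bounded linear surjection $Y\to X$'' is analytic, hence so is $\aaa$. The inclusion $\aaa\subseteq\nell$ follows from the cited \cite[Proposition~2.f.7]{LT} applied to each $X\in\aaa$, and $\ccc\subseteq\aaa$ is surjective universality. For (iii)$\Rightarrow$(ii) I would invoke the classical fact, due to Bossard, that $\phi_{\nell}$ is a $\PB^1_1$-rank on $\nell$; the standard boundedness principle for coanalytic ranks then forces $\sup_{X\in\aaa}\phi_{\nell}(X)<\omega_1$ for any analytic $\aaa\subseteq\nell$.

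The main content lies in (ii)$\Rightarrow$(i). Set $\xi:=\sup\{\phi_{\nell}(X):X\in\ccc\}<\omega_1$. I would aim for the stronger parametrized statement: for every countable $\xi$ there exists a separable $Y_\xi\in\nell$ which is surjectively universal for the Borel class $\ccc_\xi:=\{X\in\nell:\phi_{\nell}(X)\le\xi\}$; taking $Y=Y_\xi$ then settles (i). The construction of $Y_\xi$ would follow the amalgamation blueprint of \cite{D2}, adapted to the quotient side. The three ingredients I would set up are: (a)~a Borel parametrization $t\mapsto(X_t,(x^t_n))$ of $\ccc_\xi$ with uniformly chosen Schauder bases; (b)~a universal countable tree $T$ on $\nn$ of order controlled by $\xi$, together with a Schauder-tree-basis space $Y_\xi$ built on $T$, engineered so that for each branch the natural map onto the corresponding $(x^t_n)$ extends to a bounded linear surjection $Y_\xi\twoheadrightarrow X_t$; (c)~the verification that $Y_\xi\in\nell$, i.e.\ that $\phi_{\nell}(Y_\xi)<\omega_1$.

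The principal obstacle is step (c). In the embedding setting of \cite{D2}, subspaces of the tree space decompose cleanly along branches and the $\ell_1$-index of the whole space is controlled by the branch-wise indices. Here, quotients admit no such decomposition, and there is no a priori reason why an $\ell_1$-sequence in $Y_\xi$ must be localized near a single branch; the failure of a clean duality between ``not containing $\ell_1$'' and ``having separable dual'' further blocks any quick reduction to \cite{D2} applied to $\{X^*:X\in\ccc\}$. The plan is therefore to work primally and to tighten the amalgamation enough that any normalized sequence in $Y_\xi$ equivalent to the $\ell_1$-basis must, after a Bourgain-style derivative argument, concentrate on finitely many branches whose joint $\ell_1$-index is countable and computable from $\xi$. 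Arranging this localization while simultaneously preserving the surjectivity of the assembly maps onto each $X_t$ is the heart of the argument.
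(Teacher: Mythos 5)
Your treatment of the routine implications is essentially fine: (i)$\Rightarrow$(iii) via the analytic set of quotients of $Y$ works (surjectivity of an operator can be expressed with countable quantifiers via the open mapping theorem, and the fact that quotients of a space without $\ell_1$ contain no $\ell_1$ is the qualitative content of the lifting argument, which is Lemma \ref{l10} in quantitative form), and (iii)$\Rightarrow$(ii) is exactly boundedness for the rank $\phi_{\nell}$ (Theorem \ref{t9}). The paper routes the cycle as (i)$\Rightarrow$(ii)$\Rightarrow$(iii)$\Rightarrow$(i), using Lemma \ref{l10} for the first step, but your cycle is equally valid.

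The problem is (ii)$\Rightarrow$(i), which you present only as a plan and whose decisive step you explicitly leave open. Two concrete gaps: first, step (a) is not available as stated — spaces in $\ccc_\xi=\{X:\phi_{\nell}(X)\le\xi\}$ need not have Schauder bases (or even the B.A.P.), so there is no Borel parametrization of $\ccc_\xi$ ``with uniformly chosen Schauder bases''; any tree-space built on chosen bases of the $X$'s cannot get off the ground. Second, and more importantly, step (c) — showing the glued space contains no $\ell_1$ — is exactly the heart of the matter, and your proposed ``localization of $\ell_1$-sequences to finitely many branches'' is not an argument but a restatement of what must be proved. The paper resolves both points at once by interposing, for each separable $X$, the space $E_X$ of Theorem \ref{t3}: it is built from a dense sequence in $S_X$ (no basis of $X$ needed), it has a monotone Schauder basis, it maps onto $X$ by a norm-one operator $Q_X$, and — via Proposition \ref{p17} — every subspace on which $Q_X$ is strictly singular contains $c_0$, so $E_X\in\nell$ whenever $X\in\nell$ (Corollary \ref{c25}); moreover the assignment $X\mapsto E_X$ is uniform, so $\{E_X:X\in\aaa\}$ is an analytic class of basis spaces inside $\nell$ (Corollary \ref{c26}). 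This converts the quotient problem into a complementation problem for spaces with bases, where the already-known $p$-amalgamation theorem (Theorem \ref{t14}) applies: its subspace dichotomy (every subspace either contains $\ell_p$ or embeds into a finite sum of spaces from the class) is precisely the localization you were hoping to engineer by hand, and it yields a single space $V\in\nell$ with a basis containing complemented copies of all $E_X$, hence admitting every $X\in\aaa$ as a quotient (Theorem \ref{t28}). Without the $E_X$ construction (or some substitute playing the same role), your outline does not close.
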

We notice that stronger versions of Theorem \ref{t2} are valid
provided that all spaces in the class $\ccc$ have some additional property
(see \S 5).

A basic ingredient of the proof of Theorem \ref{t2} (an ingredient which is
probably of independent interest) is the construction for every separable Banach
space $X$ of a Banach space $E_X$ with special properties.
Specifically we show the following.
\begin{thm} \label{t3}
Let $X$ be a separable Banach space. Then there exists a separable Banach
space $E_X$ such that the following are satisfied.
\begin{enumerate}
\item[(i)] \emph{(Existence of a Schauder basis)} The space $E_X$ has
a normalized monotone Schauder basis $(e_n^X)$.
\item[(ii)] \emph{(Existence of a quotient map)} There exists a
norm-one linear and onto operator $Q_X:E_X\to X$.
\item[(iii)] \emph{(Subspace structure)} If $Y$ is an infinite-dimensional
subspace of $E_X$ and the operator $Q_X:Y\to X$ is strictly singular,
then $Y$ contains a copy of $c_0$.
\item[(iv)] \emph{(Representability of $X$)} For every normalized
basic sequence $(w_k)$ in $X$ there exists a subsequence
$(e_{n_k}^X)$ of $(e_n^X)$ such that $(e^X_{n_k})$ is equivalent
to $(w_k)$.
\item[(v)] \emph{(Uniformity)} The set $\eee\subseteq \sbs\times\sbs$
defined by
\[ (X,Y)\in\eee\Leftrightarrow Y \text{ is isometric to } E_X\]
is analytic.
\item[(vi)] \emph{(Preservation of separability of the dual)} $E^*_X$ is
separable if and only if $X^*$ is separable.
\end{enumerate}
\end{thm}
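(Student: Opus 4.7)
I would construct $E_X$ as a tree Banach space over $X$, with basis indexed by the nodes of a countable tree whose infinite branches encode the normalized basic sequences of $X$. Fix $X\in\sbs$ and select, in a Borel-measurable way, a sequence $(x_n^X)$ dense in $B_X$ that is large enough so that every finite normalized basic sequence in $X$ is $2$-equivalent to some tuple $(x_{s_1}^X,\dots,x_{s_k}^X)$. Let $T_X\subseteq\nn^{<\nn}$ be the tree whose nodes are precisely those finite sequences $s=(s_1,\dots,s_k)$ for which $(x_{s_i}^X)_{i=1}^{k}$ is a normalized basic sequence with basis constant at most $2$. Enumerate $T_X$ as $\{t_n:n\in\nn\}$ consistently with the tree order, let $(e_n^X)$ be the canonical basis of $c_{00}$, and set
\[
\Bigl\|\sum_n a_n e_n^X\Bigr\|_{E_X}=\max\Bigl\{\sup_{\sigma\in[T_X]}\Bigl\|\sum_{n:\,t_n\preceq\sigma}a_n\,x_{\mathrm{last}(t_n)}^X\Bigr\|_X,\ \sup_n|a_n|\Bigr\},
\]
where $[T_X]$ denotes the set of infinite branches of $T_X$ and $t_n\preceq\sigma$ means $t_n$ is an initial segment of $\sigma$. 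Completing $c_{00}$ under this norm produces $E_X$.

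\textbf{Verification of (i), (ii), (iv).} The consistent enumeration makes $(e_n^X)$ a normalized monotone Schauder basis, giving (i). Extending $Q_X(e_n^X)=x_{\mathrm{last}(t_n)}^X$ linearly yields a norm-one operator $Q_X:E_X\to X$, because the first summand in the norm dominates $\|Q_X(\cdot)\|_X$; an iteration argument along a carefully chosen branch of $T_X$ shows $Q_X$ is surjective, as in the classical proof that every separable space is a quotient of $\ell_1$. This gives (ii). Property (iv) is essentially built into the construction: a normalized basic sequence $(w_k)$ in $X$ is, up to a summable perturbation, a subsequence of $(x_n^X)$ lying on a branch of $T_X$, and the corresponding subsequence of $(e_n^X)$ is then equivalent to $(w_k)$ via the branch summand of the norm.

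\textbf{Verification of (iii), (v), (vi).} For (iii), suppose $Y\subseteq E_X$ is infinite-dimensional and $Q_X|_Y$ is strictly singular; a standard perturbation reduces to a normalized block sequence $(y_k)$ in $Y$ (with respect to $(e_n^X)$) satisfying $\|Q_X y_k\|_X\to 0$. Along any branch $\sigma$, the partial sums $\sum_{n:\,t_n\preceq\sigma}a_n x^X_{\mathrm{last}(t_n)}$ relevant to $\sum a_k y_k$ then contribute a negligible amount, so the $\sup$-coefficient term dominates and $(y_k)$ is equivalent to the unit vector basis of $c_0$. For (v), each ingredient -- the dense sequence, the tree $T_X$, its enumeration, and the countable collection of branch-plus-coordinate functionals defining the norm -- depends Borel-measurably on the code of $X$, so $X\mapsto E_X$ is Borel and the graph $\eee$ is analytic. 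For (vi), the norming set for $\|\cdot\|_{E_X}$ splits into branch-type functionals (whose closed linear span is separable iff $X^*$ is) and coordinate functionals (always separable), yielding the equivalence.

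\textbf{Main obstacle.} The principal technical difficulty is the simultaneous realization of (iii) and (iv): property (iv) forces $(e_n^X)$ to contain subsequences equivalent to every basic sequence of $X$, including $\ell_1$-sequences whenever $X$ contains them, while (iii) demands that directions on which $Q_X$ degenerates carry a $c_0$-structure. The two-term sup-norm is designed to reconcile this tension -- basic sequences, including $\ell_1$-sequences, are realized along branches where the $X$-norm keeps $Q_X$ bounded below, while any block sequence essentially escaping every branch is controlled by the $\sup$-coefficient term and therefore $c_0$-equivalent. Making this dichotomy quantitative and handling the perturbation/blocking arguments for (iii) while maintaining the Borel-measurable parametrization required for (v) is where the bulk of the work will lie.
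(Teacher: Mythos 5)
Your construction is genuinely different from the paper's, but as designed it does not deliver property (iii), which is the load-bearing part of the theorem. Concretely: take $X=C(2^\nn)$ (or any $X$ containing $\ell_1$), let $\sigma$ be an infinite branch of your tree $T_X$ whose node vectors $(u_k)$ are a normalized basic sequence equivalent to the unit vector basis of $\ell_1$ with basis constant $<2$, and for each $k$ pick an index $m_k$ with $\|x_{m_k}-u_k\|<2^{-k}$, $m_k\neq \sigma_k$, so that the node $(\sigma|(k-1))^{\con}m_k$ lies in $T_X$ and is incomparable with $\sigma|j$ for all $j\geq k$. Put $y_k=e^X_{n(\sigma|k)}-e^X_{n((\sigma|(k-1))^{\con}m_k)}$. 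Then $\|Q_X(y_k)\|_X\leq 2^{-k}$, so $Q_X$ is compact on $Y=\overline{\mathrm{span}}\{y_k\}$; but since no branch contains both support nodes of any $y_k$, while the branch $\sigma$ picks up exactly the vectors $u_k$, the branch seminorm gives $\|\sum_k a_ky_k\|_{E_X}\geq \|\sum_k a_ku_k\|_X\geq c\sum_k|a_k|$, and the trivial upper bound is $\sum_k|a_k|$. So $Y$ is isomorphic to $\ell_1$, contains no $c_0$, and (iii) fails. The error in your sketch is the step ``along any branch the relevant partial sums contribute a negligible amount'': a branch meets only part of each block's support, so smallness of $\|Q_X(y_k)\|$ (a cancellation over the whole support) gives no control of branch-restricted sums. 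The paper avoids this by using a single linearly ordered dense sequence $(x_n)$ in $S_X$ and the norm $\|z\|_{E_X}=\sup_m\|\sum_{n\leq m}z(n)x_n\|_X$: there a norming partial sum splits into finitely many whole blocks (controlled by $\|Q_X(v_{k_i})\|_X$) plus at most one cut block (controlled by $\|v_{k_l}\|_{E_X}$), which is exactly Propositions 17 and 18 of the paper, and this linear-order feature is what reconciles (iii) with (iv).

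There are further gaps. For (iv), your tree only contains finite basic sequences with constant at most $2$, so a normalized basic sequence in $X$ with large basis constant is not (even after a summable perturbation) a subsequence lying on a branch; the paper gets (iv) for all constants because for a subsequence $(e^X_{n_k})$ its norm is literally $\max_i\|\sum_{k\leq i}a_kx_{n_k}\|_X$. For (i), monotonicity is not justified: your norming family consists of infinite-branch sums and coordinates, and a canonical projection of a vector is not dominated by these unless you also include all finite initial chains in the norming set. Most importantly, (vi) is not addressed in substance: separability of (the closed span of) a norming family does not imply separability of the dual -- $\ell_1$ is normed by the countably many finitely supported sign functionals, yet $\ell_1^*=\ell_\infty$. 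In the paper, (vi) is the hard core of the proof: assuming $X^*$ separable but $E_X^*$ nonseparable, one invokes the Argyros--Dodos--Kanellopoulos theorem to get a sequence in $E_X$ topologically equivalent to the James tree basis, refines it through Stern's partition theorem on dyadic subtrees and Rosenthal's $\ell_1$ dichotomy, and uses the block-sequence Propositions 17 and 18 to show $Q_X$ fixes such a system, contradicting separability of $X^*$. Some analogue of that analysis would be needed for any competing construction, and nothing in your proposal plays that role.
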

We notice that there exists a large number of related results found
in the literature; see, for instance, \cite{DFJP,FOS,FOSZ,JZ,OS,OSZ,Z}.
The novelty in Theorem \ref{t3} is that, beside functional analytic tools, its
proof is enriched with descriptive set theory and the combinatorial
machinery developed in \cite{ADK1} and \cite{ADK2}.

The paper is organized as follows. In \S 2 we gather some background
material. In \S 3 we define the space $E_X$ and we give the proof
of Theorem \ref{t3}. The proof of Theorem \ref{t2} (actually of a more
detailed version of it) is given in \S 4. Finally, in \S 5 we present
some related results and we discuss open problems.

%----------------------Background material-----------------------%

\section{Background material}

Our general notation and terminology is standard as can be found, for instance,
in \cite{LT} and \cite{Kechris}. By $\nn=\{0,1,2,...\}$ we shall denote the
natural numbers.

We will frequently need to compute the descriptive set-theoretic complexity
of various sets and relations. To this end, we will use the ``Kuratowski-Tarski
algorithm". We will assume that the reader is familiar with this classical method.
For more details we refer to \cite[page 353]{Kechris}.

\subsection{Trees}

Let $\Lambda$ be a non-empty set. By $\ltr$ we denote the set of all finite
sequences in $\Lambda$ while by $\Lambda^\nn$ we denote the set of all
infinite sequences in $\Lambda$ (the empty sequence is denoted by $\varnothing$
and is included in $\ltr$). We view $\ltr$ as a tree equipped with the (strict) partial
order $\sqsubset$ of extension. Two nodes $s,t\in\ltr$ are said to be
\textit{comparable} if either $s\sqsubseteq t$ or $t\sqsubseteq s$. Otherwise,
$s$ and $t$ are said to be \textit{incomparable}. A subset of $\ltr$ consisting
of pairwise comparable nodes is said to be a \textit{chain}, while a subset of
$\ltr$ consisting of pairwise incomparable nodes is said to be an \textit{antichain}.

A \textit{tree} $T$ on $\Lambda$ is a subset of $\ltr$ which is closed under
initial segments. By $\tr(\Lambda)$ we denote the set of all trees on
$\Lambda$. Hence
\[ T\in\tr(\Lambda) \Leftrightarrow \forall s,t\in\ltr \ (s\sqsubseteq t
\text{ and } t\in T\Rightarrow s\in T). \]
The \textit{body} of a tree $T$ on $\Lambda$ is defined to be the set
$\{\sigma\in\Lambda^\nn: \sigma|n\in T \ \forall n\in\nn\}$ and is denoted
by $[T]$. A tree $T$ is said to be \textit{well-founded} if $[T]=\varnothing$.
By $\wf(\Lambda)$ we denote the set of all well-founded trees on $\Lambda$.
For every $T\in\wf(\Lambda)$ we let
$T'=\{s\in T: \exists t\in T \text{ with } s\sqsubset t\}\in\wf(\Lambda)$.
By transfinite recursion, we define the iterated derivatives $T^{\xi}$
$(\xi<\kappa^+)$ of $T$, where $\kappa$ stands for the cardinality of $\Lambda$.
The \textit{order} $o(T)$ of $T$ is defined to be the least ordinal $\xi$
such that $T^{\xi}=\varnothing$.

Let $S$ and $T$ be trees on two non-empty sets $\Lambda_1$ and $\Lambda_2$
respectively. A map $\psi:S\to T$ is said to be \textit{monotone} if for
every $s_0, s_1\in S$ with $s_0\sqsubset s_1$ we have $\psi(s_0)\sqsubset
\psi(s_1)$. We notice that if there exists a monotone map $\psi:S\to T$
and $T$ is well-founded, then $S$ is well-founded and $o(S)\leq o(T)$.

\subsection{Dyadic subtrees and related combinatorics}

Let $\ct$ be the Cantor tree; i.e. $\ct$ is the set of all finite
sequences of $0$'s and $1$'s. For every $s,t\in\ct$ we let $s\wedge t$
be the $\sqsubset$-maximal node $w$ of $\ct$ with $w\sqsubseteq s$ and
$w\sqsubseteq t$. If $s,t\in\ct$ are incomparable with respect to $\sqsubseteq$,
then we write $s\prec t$ provided that $(s\wedge t)^{\con}0 \sqsubseteq s$
and $(s\wedge t)^{\con}1\sqsubseteq t$. We say that a subset $D$ of $\ct$
is a \textit{dyadic subtree} of $\ct$ if $D$ can be written in the form
$\{d_t:t\in\ct\}$ so that for every $t_0, t_1\in\ct$ we have $t_0\sqsubset t_1$
(respectively $t_0\prec t_1$) if and only if $d_{t_0} \sqsubset d_{t_1}$
(respectively $d_{t_0}\prec d_{t_1}$). It is easy to see that such a
representation of $D$ as $\{d_t: t\in\ct\}$ is unique. In the sequel
when we write $D=\{d_t:t\in\ct\}$, where $D$ is a dyadic subtree, we will
assume that this is the canonical representation of $D$ described above.

For every dyadic subtree $D$ of $\ct$ by $[D]_{\chains}$ we denote the set
of all infinite chains of $D$. Notice that $[D]_{\chains}$ is a $G_\delta$,
hence Polish, subspace of $2^{\ct}$. We will need the following partition
theorem due to J. Stern (see \cite{Ste}).
\begin{thm} \label{t4}
Let $D$ be a dyadic subtree of $\ct$ and $\xxx$ be an analytic subset of
$[D]_{\chains}$. Then there exists a dyadic subtree $S$ of $\ct$ with
$S\subseteq D$ and such that either $[S]_{\chains}\subseteq \xxx$
or $[S]_{\chains}\cap \xxx=\varnothing$.
\end{thm}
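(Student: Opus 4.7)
The plan is to follow the classical template for ``infinite-dimensional Ramsey'' theorems: first establish the dichotomy for a simple class of sets by a fusion argument, then bootstrap to analytic sets via a tree representation and a more elaborate fusion that also tracks witnesses. Throughout, the candidate dyadic subtree $S=\{s_t:t\in\ct\}$ is built by induction on $|t|$, each level refining the choice of nodes inside $D$.

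For the clopen case, fix a clopen $\xxx\subseteq[D]_{\chains}$. Being clopen, $\xxx$ is determined by the membership of finitely many nodes in the chain. At stage $n$ of the construction one has only finitely many finite chains through $\{s_t:|t|\le n\}$, and for each such partial chain one must decide whether its continuations inside $S$ should be driven ``into $\xxx$'' or ``outside $\xxx$''. A finite pigeonhole applied at this step lets one select extensions $s_{t^{\con}0},s_{t^{\con}1}\in D$ so that the decision is uniform across all partial chains at the current level. Iterating $\omega$ times, the limit dyadic subtree $S\subseteq D$ has $[S]_{\chains}$ either contained in or disjoint from $\xxx$.

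To pass to analytic $\xxx$, represent $\xxx=p[F]$ with $F\subseteq[D]_{\chains}\times\nn^\nn$ closed, equivalently via a tree $T$ on $2\times\nn$: a chain $c$ lies in $\xxx$ iff there exists $\alpha\in\nn^\nn$ with $(c|n,\alpha|n)\in T$ for every $n$. The fusion now runs simultaneously on $D$ and on candidate finite witnesses: at each stage one assigns to every partial chain of the current finite approximation of $S$ an attempted initial segment of $\alpha$, and tries to extend both together while keeping pairs in $T$. Either this process can be sustained through all levels, in which case the limit $S$ has every chain paired with a genuine infinite witness and $[S]_{\chains}\subseteq\xxx$; or at some finite stage no uniform extension is possible, and a pigeonhole-cum-refinement step produces a dyadic subtree whose chains admit no possible witness, giving $[S]_{\chains}\cap\xxx=\varnothing$. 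Equivalently, one verifies that the family of $\xxx$ for which the conclusion holds hereditarily in every dyadic subtree of $D$ is closed under the Souslin operation, hence contains all analytic sets.

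The main obstacle is the uniformity required at each step of the fusion: the refinement must work simultaneously for all of the (finitely many) partial chains present at the current level, and in the analytic case simultaneously over all compatible assignments of partial witnesses to those chains. This is exactly where one needs a finite Ramsey-type pigeonhole on dyadic subtrees, ensuring that one can always advance from a finite approximation to the next level while committing to the correct side of the dichotomy, and that enough room remains in $D$ for the limit $S$ to be a full dyadic subtree of $\ct$.
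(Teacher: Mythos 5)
The paper does not prove this statement at all: it is quoted as Stern's partition theorem and referred to \cite{Ste}, so your proposal has to be measured against the known proofs (Stern's argument, or the derivation from Milliken's strong subtree theorem). Measured that way, there is a genuine gap, and it sits exactly where your sketch says ``a finite pigeonhole applied at this step lets one select extensions.'' At stage $n$ of the fusion you have finitely many nodes $s_t$, $|t|\leq n$, already chosen in $D$, and you must refine \emph{simultaneously} the infinite trees of $D$ sitting above each of them, in a way that (a) uniformizes the colouring induced by the partial chains and partial witnesses, and (b) leaves dense (level-dominating) sets of possible extensions above every node so that the construction can be continued to a genuine dyadic subtree. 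The colourings to be uniformized live on infinite level products of the $d$ trees above the current nodes, not on a finite set, and the statement that such a uniformization is always possible is precisely the Halpern--L\"{a}uchli theorem (in its dense level-set form for $d$ trees), a deep partition theorem; no finite pigeonhole yields it. This is why Stern's theorem is usually obtained as a consequence of Halpern--L\"{a}uchli via Milliken-type strong subtree arguments, and it is the entire content of the result. A warning sign that something substantial must be invoked: the dichotomy is false for arbitrary subsets of $[D]_{\chains}$ (an ultrafilter/choice construction gives non-Ramsey sets), so no argument of the generality you describe in the clopen step can simply ``iterate $\omega$ times'' without a strong infinitary pigeonhole controlling where the chosen extensions may lie.

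Two smaller points. First, in the clopen case your opening claim that $\xxx$ is determined by membership of finitely many nodes uses compactness, but $[D]_{\chains}$ is only a $G_\delta$ (non-compact) subspace of $2^{\ct}$, so relatively clopen sets need not be finitely determined; the correct base case is rather sets determined by a fixed finite initial segment of the chain, handled through the same Halpern--L\"{a}uchli step. Second, the passage to analytic sets via a tree on $2\times\nn$ with witness-tracking, or equivalently via closure of the hereditarily-Ramsey sets under the Souslin operation, is the right general scheme (it is how Ellentuck/Milliken-style proofs run), but each ``either the process can be sustained or no uniform extension is possible'' alternative again requires combinatorial forcing plus the same pigeonhole, so as written it inherits the gap rather than closing it. To repair the proposal you should isolate the Halpern--L\"{a}uchli theorem explicitly as the stepping lemma, or simply derive the statement from Milliken's theorem by noting that an infinite chain of a dyadic subtree is coded by a branch together with an increasing sequence of levels.
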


\subsection{Separable Banach spaces with non-separable dual}

We will need a structural result concerning separable Banach spaces with
non-separable dual. To state this result and to facilitate future references
to it, it is convenient to introduce the following definition.
\begin{defn}
\label{d5} Let $X$ be a Banach space and $(x_t)_{t\in\ct}$ be a sequence
in $X$ indexed by the Cantor tree. We say that $(x_t)_{t\in\ct}$ is
\emph{topologically equivalent to the basis of James tree} if the following
are satisfied.
\begin{enumerate}
\item[(1)] The sequence $(x_t)_{t\in\ct}$ is semi-normalized.
\item[(2)] For every infinite antichain $A$ of $\ct$ the sequence
$(x_t)_{t\in A}$ is weakly null.
\item[(3)] For every $\sg\in 2^\nn$ the sequence $(x_{\sg|n})$ is weak* convergent
to an element $x^{**}_\sg\in X^{**}\setminus X$. Moreover, if $\sg, \tau\in 2^\nn$
with $\sg\neq \tau$, then $x^{**}_\sg\neq x^{**}_\tau$.
\end{enumerate}
\end{defn}
The archetypical example of such a sequence is the standard Schauder basis of
the space $JT$ (see \cite{Ja}). There are also classical Banach spaces having
a natural Schauder basis which is topologically equivalent to the basis of
James tree; the space $C(2^\nn)$ is an example. We isolate, for future use,
the following fact.
\begin{fact} \label{f6}
Let $X$ be a Banach space and $(x_t)_{t\in\ct}$ be a sequence in $X$ which is
topologically equivalent to the basis of James tree. Then for every dyadic
subtree $D=\{d_t:t\in\ct\}$ of $\ct$ the sequence $(x_{d_t})_{t\in\ct}$ is
topologically equivalent to the basis of James tree.
\end{fact}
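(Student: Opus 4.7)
My plan is to verify the three conditions of Definition \ref{d5} directly for the subsequence $(x_{d_t})_{t\in\ct}$, exploiting the fact that a dyadic subtree preserves all the combinatorial structure that these conditions depend on (comparability, the order $\prec$, and consequently chains, antichains, and infinite branches).

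Condition (1) is trivial: $(x_{d_t})_{t\in\ct}$ is a subsequence of the semi-normalized sequence $(x_t)_{t\in\ct}$, so it is semi-normalized with the same constants. For condition (2), let $A$ be an infinite antichain of $\ct$; then $\{d_t : t \in A\}$ is an infinite antichain of $\ct$ as well, because by the definition of dyadic subtree, $d_{t_0} \sqsubset d_{t_1}$ holds if and only if $t_0 \sqsubset t_1$, and so incomparability is preserved. Applying condition (2) of Definition \ref{d5} for the original sequence $(x_t)_{t\in\ct}$ to the antichain $\{d_t : t\in A\}$ yields that $(x_{d_t})_{t\in A}$ is weakly null.

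The only point requiring a little thought is condition (3). Fix $\sigma \in 2^\nn$. Then $\{\sigma|n : n\in\nn\}$ is an infinite chain of $\ct$, and since dyadic subtrees preserve the extension order, $\{d_{\sigma|n} : n\in\nn\}$ is an infinite chain of $\ct$. Hence there is a unique $\tau_\sigma \in 2^\nn$ such that $d_{\sigma|n} \sqsubset \tau_\sigma$ for all $n$, and $(d_{\sigma|n})_n$ is a subsequence of $(\tau_\sigma|k)_k$. By condition (3) applied to $\tau_\sigma$, the sequence $(x_{\tau_\sigma|k})_k$ is weak* convergent to some $x^{**}_{\tau_\sigma} \in X^{**}\setminus X$, so the subsequence $(x_{d_{\sigma|n}})_n$ is weak* convergent to the same limit, which I denote $y^{**}_\sigma := x^{**}_{\tau_\sigma}$.

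It remains to check that $\sigma \mapsto y^{**}_\sigma$ is injective. If $\sigma \neq \sigma'$, let $n_0$ be minimal with $\sigma(n_0) \neq \sigma'(n_0)$; without loss of generality $\sigma|(n_0{+}1) \prec \sigma'|(n_0{+}1)$. Since the dyadic subtree preserves $\prec$, we have $d_{\sigma|(n_0+1)} \prec d_{\sigma'|(n_0+1)}$, so these two nodes are incomparable, and consequently all their extensions are separated into distinct branches. Thus $\tau_\sigma \neq \tau_{\sigma'}$, and condition (3) for the original sequence gives $y^{**}_\sigma = x^{**}_{\tau_\sigma} \neq x^{**}_{\tau_{\sigma'}} = y^{**}_{\sigma'}$. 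I do not anticipate any real obstacle here; the proof is essentially a dictionary translation between combinatorial properties of $\ct$ and their preservation under dyadic subtree embeddings.
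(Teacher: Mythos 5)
Your verification is correct and is exactly the routine argument the paper has in mind: the paper states Fact \ref{f6} without proof precisely because a dyadic subtree preserves comparability, incomparability and the order $\prec$, so antichains map to antichains and the chain $\{d_{\sigma|n}\}$ sits inside a unique branch $\tau_\sigma$, with $\sigma\mapsto\tau_\sigma$ injective, which is all you use. No gaps; the only implicit point worth keeping in mind is that $(x_{d_{\sigma|n}})_n$ is a genuine subsequence of $(x_{\tau_\sigma|k})_k$ because the lengths $|d_{\sigma|n}|$ strictly increase, which you in effect note.
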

We notice that if a Banach space $X$ contains a sequence $(x_t)_{t\in\ct}$
which is topologically equivalent to the basis of James tree, then $X^*$
is not separable. The following theorem establishes the converse for
separable Banach spaces not containing a copy of $\ell_1$ (see
\cite[Theorem 40]{ADK1} or \cite[Theorem 17]{ADK2}).
\begin{thm} \label{t7}
Let $X$ be a separable Banach space not containing a copy of $\ell_1$ and with
non-separable dual. Then $X$ contains a sequence $(x_t)_{t\in\ct}$ which is
topologically equivalent to the basis of James tree.
\end{thm}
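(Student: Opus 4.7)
The plan is to combine the Odell--Rosenthal dichotomy with the non-separability of $X^*$, and then carve out a dyadic subtree using Stern's partition theorem (Theorem \ref{t4}) together with the combinatorial lemmas developed in \cite{ADK1, ADK2}. Two basic facts drive the argument. Since $X$ does not contain $\ell_1$, every element of $X^{**}$ is the weak* limit of a sequence in $X$ (Odell--Rosenthal). Since $X^*$ is non-separable, there exist $\ee>0$ and an $\ee$-separated family $(x_\alpha^*)_{\alpha<\omega_1}$ in $B_{X^*}$, which will be used to force a continuum of genuinely distinct ``branch limits'' in $X^{**}\setminus X$.

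The first step is to populate the tree. Using the $\ee$-separation of the $x_\alpha^*$ together with a standard transfinite argument, one produces an uncountable family $(z^{**}_\alpha)_{\alpha<\omega_1}\subseteq B_{X^{**}}\setminus X$ whose values on the $x_\alpha^*$ are pairwise separated. By Odell--Rosenthal each $z^{**}_\alpha$ is the weak* limit of a bounded sequence in $X$. A level-by-level diagonal construction over $\ct$, using Rosenthal's $\ell_1$ theorem to extract weakly Cauchy subsequences at every stage, yields a semi-normalized family $(x_t)_{t\in\ct}\subseteq B_X$ and an injection $\sigma\mapsto x^{**}_\sigma\in X^{**}\setminus X$ such that, for every $\sigma\in 2^\nn$, $(x_{\sigma|n})_n$ is weakly Cauchy and weak*-converges to $x^{**}_\sigma$.

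Next, the set of chains in $\ct$ for which the corresponding branch weak*-converges to a point of $X^{**}\setminus X$, as well as the relation ``distinct branches produce distinct limits'', are analytic in the relevant spaces of pairs of chains (this is routine via the Kuratowski--Tarski algorithm). Theorem \ref{t4} then supplies a dyadic subtree $D=\{d_t:t\in\ct\}$ on which conditions (1) and (3) of Definition \ref{d5} hold after reindexing to $(x_{d_t})_{t\in\ct}$. Fact \ref{f6} is what allows these Stern-type pruning steps to be composed without destroying the tree structure built in the previous step.

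The main obstacle is the antichain condition (2). For a fixed infinite antichain $A\subseteq\ct$, Rosenthal's theorem gives a weakly Cauchy subsequence of $(x_{d_t})_{t\in A}$ whose weak* limit must be shown to vanish, and this has to hold uniformly over the continuum of antichains. Since Stern's theorem as stated applies only to chains, the route---developed in \cite{ADK1, ADK2}---is to parametrize infinite antichains of a given dyadic subtree by points of $[D]_{\chains}$ via a canonical coding, so that the ``weakly null'' requirement becomes an analytic condition on chains, amenable to one last invocation of Theorem \ref{t4}. A final dyadic sub-subtree extracted in this way carries all three properties of Definition \ref{d5} simultaneously, and the corresponding family $(x_{d_t})_{t\in\ct}$ is topologically equivalent to the basis of James tree.
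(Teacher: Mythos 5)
A preliminary remark: the paper does not prove Theorem \ref{t7} at all --- it is imported from \cite[Theorem 40]{ADK1} and \cite[Theorem 17]{ADK2}, where the proof requires a substantial amount of machinery. So your proposal has to stand on its own as a reconstruction of that external argument, and as it stands it does not: the decisive step is asserted rather than proved. The heart of the theorem is your second paragraph, where a ``level-by-level diagonal construction'' is claimed to yield $(x_t)_{t\in\ct}$ with every branch weakly Cauchy, with weak* limit outside $X$, and with distinct limits for distinct branches. Diagonal extraction via Rosenthal's dichotomy does give weakly Cauchy branches, but nothing in your construction forces the limits to be distinct or to lie outside $X$: each node $x_{\sigma|n}$ lies on continuum many branches, so you cannot prescribe branch limits from your transfinite family $(z^{**}_\alpha)$ (which in fact plays no further role in the sketch). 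Indeed the degenerate choice $x_t=x$ for a fixed norm-one vector satisfies everything you have actually guaranteed up to that point, and every branch then converges in norm to an element of $X$. Distinctness of branch limits and their membership in $X^{**}\setminus X$ have to be forced by a quantitative separation mechanism built into the tree itself --- typically an accompanying tree of functionals, as in Stegall's dyadic construction for separable spaces with non-separable dual, with estimates maintained at every level so that suitable functionals separate the two subtrees above each node. Supplying and propagating that separation is precisely where the work in \cite{ADK1,ADK2} lies, and it is missing here.

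The same omission undermines the partition steps. Theorem \ref{t4} only homogenizes a property of chains; to conclude that the \emph{good} alternative survives on some dyadic subtree you must rule out a subtree all of whose branches have limits in $X$, or two of whose branches share a limit, and that exclusion again needs the quantitative separation you have not built in. Moreover, ``distinct branches produce distinct limits'' is a property of \emph{pairs} of chains, which Stern's theorem as stated does not treat (and ``the limit lies in $X^{**}\setminus X$'' is not obviously analytic or co-analytic, so even the single-chain applications need care). Finally, the antichain condition (2) of Definition \ref{d5} does not reduce to a condition on chains ``via a canonical coding'': the infinite antichains of a dyadic subtree are not parametrized by its chains, and in \cite{ADK1,ADK2} this step uses partition results for special families of antichains (e.g.\ antichains converging to a branch) together with a functional-analytic argument excluding non-zero weak limits --- not merely another application of Theorem \ref{t4}. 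In short, the proposal correctly lists the ingredients (Odell--Rosenthal, Rosenthal's dichotomy, Stern's theorem, the ADK combinatorics) and the clean-up steps, but the seed construction that makes the clean-up possible, and the arguments excluding the bad homogeneous alternatives, are exactly the content of the cited theorem and are not present.
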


\subsection{Co-analytic ranks}

Let $(X, \Sigma)$ be a standard Borel space; that is, $X$ is a set, $\Sigma$
is a $\sigma$-algebra on $X$ and the measurable space $(X, \Sigma)$ is Borel
isomorphic to the reals. A subset $A$ of $X$ is said to be \textit{analytic}
if there exists a Borel map $f:\nn^\nn\to X$ with $f(\nn^\nn)=A$. A subset
of $X$ is said to be \textit{co-analytic} if its complement is analytic.
Now let $B$ be a co-analytic subset of $X$. A map $\phi:B\to\omega_1$ is said
to be a \textit{co-analytic rank on} $B$ if there exist relations
$\leq_\Sigma$ and $\leq_\Pi$ in $X\times X$ which are analytic and co-analytic
respectively and are such that for every $y\in B$ we have
\[ x\in B \text{ and } \phi(x)\leq \phi(y) \Leftrightarrow x\leq_\Sigma y
\Leftrightarrow x\leq_\Pi y. \]
For our purposes, the most important property of co-analytic ranks is that
they satisfy \textit{boundedness}. This means that if $\phi:B\to\omega_1$
is a co-analytic rank on a co-analytic set $B$ and $A\subseteq B$ is analytic,
then $\sup\{\phi(x):x\in A\}<\omega_1$. For a proof as well as for a thorough
presentation of Rank Theory we refer to \cite[\S 34]{Kechris}.

\subsection{The standard Borel space of separable Banach spaces}

Let $F\big(C(2^\nn)\big)$ be the set of all closed subsets of $C(2^\nn)$ and
$\Sigma$ be the Effors-Borel structure on $F\big(C(2^\nn)\big)$; that is, $\Sigma$
is the $\sigma$-algebra generated by the sets
\[ \big\{ F\in F\big(C(2^\nn)\big): F\cap U\neq\varnothing\big\}\]
where $U$ ranges over all open subsets of $C(2^\nn)$. Consider the set
\[ \sbs=\big\{ X\in F\big(C(2^\nn)\big): X \text{ is a linear subspace}\big\}.\]
It is easy to see that the set $\sbs$ equipped with the relative Effors-Borel
structure is a standard Borel space (see \cite{Bos2} for more details). The
space $\sbs$ is referred in the literature as the \textit{standard Borel
space of separable Banach spaces}. We will need the following consequence
of the Kuratowski--Ryll-Nardzewski selection Theorem (see \cite[Theorem 12.13]{Kechris}).
\begin{prop} \label{p8}
There exists a sequence $S_n:\sbs\to C(2^\nn)$ $(n\in\nn)$ of Borel maps
such that for every $X\in\sbs$ with $X\neq \{0\}$ we have $S_n(X)\in S_X$ and
the sequence $(S_n(X))$ is norm dense in $S_X$, where $S_X$ stands for
the unit sphere of $X$.
\end{prop}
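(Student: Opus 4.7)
The plan is to invoke the Kuratowski--Ryll-Nardzewski selection theorem applied to the multifunction $\sbs\ni X\mapsto X\in F(C(2^\nn))$. This multifunction has nonempty closed values (each $X$ contains $0$) and is Borel in the Effros sense: by the very definition of the $\sigma$-algebra on $\sbs$, the set $\{X\in\sbs:X\cap U\neq\varnothing\}$ is Borel for every open $U\subseteq C(2^\nn)$. The sequence version of Kuratowski--Ryll-Nardzewski (Kechris, Theorem 12.13) then produces Borel maps $f_n:\sbs\to C(2^\nn)$ with $f_n(X)\in X$ for all $n$ and $(f_n(X))_n$ norm dense in $X$ for every $X\in\sbs$.

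It remains to modify $(f_n)$ so that its values lie on the unit sphere. The set $A=\{X\in\sbs:X=\{0\}\}=\bigcap_{n\in\nn}\{X:f_n(X)=0\}$ is Borel, so we treat it separately. For $X\in\sbs\setminus A$, density of $(f_n(X))_n$ in $X$ together with $X\neq\{0\}$ forces infinitely many indices $n$ with $f_n(X)\neq 0$; let $k_n(X)$ denote the $n$-th such index in increasing order. Each $k_n$ is Borel because
\[ \{k_n=m\}=\{X:f_m(X)\neq 0\}\cap\bigl\{X:\#\{j<m:f_j(X)\neq 0\}=n\bigr\} \]
is a finite Boolean combination of Borel sets. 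I then set $S_n(X)=f_{k_n(X)}(X)/\|f_{k_n(X)}(X)\|$ on $\sbs\setminus A$ and extend $S_n$ by any constant value on $A$. The resulting map is Borel: on each cell $\{k_n=m\}$ it agrees with $X\mapsto f_m(X)/\|f_m(X)\|$, which is Borel on the Borel set $\{X:f_m(X)\neq 0\}$.

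The required conclusions are now easily verified. By construction $S_n(X)\in S_X$ whenever $X\neq\{0\}$. For density in $S_X$, given $x\in S_X$ and $\ee\in(0,1)$, pick $m$ with $\|f_m(X)-x\|<\ee/2$; then $\bigl|\|f_m(X)\|-1\bigr|<\ee/2$, so $f_m(X)\neq 0$ and $m=k_n(X)$ for some $n$. The standard triangle inequality yields
\[ \|S_n(X)-x\|\leq\bigl|1-\|f_m(X)\|\bigr|+\|f_m(X)-x\|<\ee. \]
The only substantive ingredient is the invocation of Kuratowski--Ryll-Nardzewski; everything else is routine Borel bookkeeping, so no real obstacle is anticipated.
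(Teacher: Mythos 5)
Your proof is correct, and it follows exactly the route the paper intends: the paper states Proposition \ref{p8} without proof as a consequence of the Kuratowski--Ryll-Nardzewski selection theorem (Kechris, Theorem 12.13), which is precisely your main ingredient. The normalization and Borel bookkeeping you supply (re-indexing the nonzero selectors and dividing by the norm) are the routine details the paper leaves implicit, and they are carried out correctly.
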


\subsection{The class $\ncn_Z$ and Bourgain's indices}

Let $Z$ be a Banach space with a Schauder basis\footnote[2]{Throughout
the paper when we say that a Banach space $X$ has a Schauder basis or the
bounded approximation property, then we implicitly assume that $X$ is
infinite-dimensional.}. We fix a normalized Schauder basis $(z_n)$ of $Z$.
If $Z$ is one of the classical sequence spaces $c_0$ and $\ell_p$
$(1\leq p<+\infty)$, then we let $(z_n)$ be the standard unit vector
basis. We consider the set
\[ \ncn_Z=\{ X\in\sbs: X \text{ does not contain an isomorphic copy of } Z\}.\]
Let $\delta\geq 1$ and let $Y$ be an arbitrary separable Banach space. Following
J. Bourgain \cite{Bou}, we introduce a tree $\mathbf{T}(Y,Z,(z_n),\delta)$
on $Y$ defined by the rule
\[ (y_n)_{n=0}^k\in\mathbf{T}(Y,Z,(z_n),\delta) \ \Leftrightarrow \
(y_n)_{n=0}^k \text{ is } \delta-\text{equivalent to } (z_n)_{n=0}^k.\]
In particular, if $Z$ is the space $\ell_1$, then for every $\delta\geq 1$
and every finite sequence $(y_n)_{n=0}^k$ in $Y$ we have
$(y_n)_{n=0}^k\in \mathbf{T}(Y,\ell_1,(z_n),\delta)$ if and only if
for every $a_0,..., a_k\in\rr$ it holds that
\[ \frac{1}{\delta} \sum_{n=0}^k |a_n| \leq \big\| \sum_{n=0}^k a_n y_n
\big\| \leq \delta \sum_{n=0}^k |a_n|. \]
We notice that $Y\in\ncn_Z$ if and only if for every $\delta\geq 1$ the tree
$\mathbf{T}(Y,Z,(z_n),\delta)$ is well-founded. We set $\phi_{\ncn_Z}(Y)=\omega_1$
if $Y\notin\ncn_Z$, while if $Y\in\ncn_Z$ we define
\begin{equation}
\label{e1} \phi_{\ncn_Z}(Y)=\sup\big\{ o\big(\mathbf{T}(Y,Z,(z_n),\delta)\big):
\delta\geq 1 \big\}.
\end{equation}
In \cite{Bou}, Bourgain proved that for every Banach space $Z$ with a
Schauder basis and every $Y\in\sbs$ we have $Y\in\ncn_Z$ if and only if
$\phi_{\ncn_Z}(Y)<\omega_1$. We need the following refinement of this result
(see \cite[Theorem 4.4]{Bos2}).
\begin{thm} \label{t9}
Let $Z$ be a Banach space with a Schauder basis. Then the set $\ncn_Z$
is co-analytic and the map $\phi_{\ncn_Z}:\ncn_Z\to\omega_1$ is a
co-analytic rank on $\ncn_Z$.
\end{thm}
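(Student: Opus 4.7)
The plan is to replace the tree $\mathbf{T}(Y,Z,(z_n),\delta)$, which lives on the uncountable space $Y$, by a Borel family of trees $T_\delta(Y)$ on $\nn$, and then transfer the complexity and rank information from the classical fact (see \cite[\S 34]{Kechris}) that the tree-order $o:\wf(\nn)\to\omega_1$ is a co-analytic rank. Using the selectors $(S_n)$ of Proposition \ref{p8}, for each rational $\delta\geq 1$ I would declare $(n_0,\ldots,n_k)\in T_\delta(Y)$ iff $(S_{n_0}(Y),\ldots,S_{n_k}(Y))$ is $\delta$-equivalent to $(z_0,\ldots,z_k)$. The defining condition is a countable conjunction, ranging over rational coefficient vectors, of closed conditions in $Y$, so $Y\mapsto T_\delta(Y)$ is a Borel map from $\sbs$ into $\tr(\nn)$.

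The key technical step is a perturbation lemma. Since $(S_n(Y))$ is norm-dense in $S_Y$, and any normalized basic sequence $\delta$-equivalent to $(z_n)$ remains $(\delta+\eta)$-equivalent under sufficiently small coefficient-wise perturbations, a greedy construction yields monotone maps
\[
T_\delta(Y)\longrightarrow\mathbf{T}\big(Y,Z,(z_n),\delta\big)\longrightarrow T_{\delta+\eta}(Y)
\]
for every $\eta>0$. Consequently $Y\in\ncn_Z$ iff $T_\delta(Y)\in\wf(\nn)$ for every rational $\delta\geq 1$, and $\phi_{\ncn_Z}(Y)=\sup_{\delta\in\mathbb{Q},\,\delta\geq 1}o(T_\delta(Y))$ in that case. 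Intersecting over countably many $\delta$ shows that $\ncn_Z$ is co-analytic, settling the first assertion.

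For the rank, I would amalgamate the family via
\[
T(Y):=\{\varnothing\}\cup\{(k)\con s:k\in\nn,\ s\in T_{\delta_k}(Y)\},
\]
where $(\delta_k)$ enumerates $\mathbb{Q}\cap[1,\infty)$. A direct iterated-derivative calculation gives $o\big(T(Y)\big)=\phi_{\ncn_Z}(Y)+1$ whenever $Y\in\ncn_Z$, and $T(Y)\notin\wf(\nn)$ otherwise, with $Y\mapsto T(Y)$ Borel. Pulling back through this Borel map the analytic and co-analytic relations that witness $o$ as a rank on $\wf(\nn)$ produces the required analytic $\leq_\Sigma$ and co-analytic $\leq_\Pi$ on $\sbs\times\sbs$; the shift by one is absorbed since $\phi(X)\leq\phi(Y)\Leftrightarrow\phi(X)+1\leq\phi(Y)+1$. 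The main obstacle lies in the perturbation lemma: monotonicity of the second map in the display above forces the choice of $n_i$ to depend only on the first $i$ coordinates of the input branch, so I would fix in advance a summable sequence $(\eta_i)$ with total mass small compared to $\eta$ and the basis constant of $(z_n)$, and recursively pick $n_i$ with $\|S_{n_i}(Y)-y_i\|<\eta_i$; the standard basic-sequence stability lemma then secures uniform $(\delta+\eta)$-equivalence along every branch, and a routine verification confirms Borel measurability of the construction in the parameter $Y$.
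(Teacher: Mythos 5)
You should first note that the paper does not prove Theorem \ref{t9} at all: it is quoted from Bossard \cite{Bos2} (Theorem 4.4 there). Your plan --- code Bourgain's trees by Borel-in-$Y$ trees on $\nn$ via dense selectors, verify by a perturbation argument that the coded trees compute the same ordinal, amalgamate over a countable set of constants, and pull back the standard co-analytic rank on $\wf(\nn)$ --- is exactly the standard route, and the rank-transfer part (the map $Y\mapsto T(Y)$ Borel, $o(T(Y))=\phi_{\ncn_Z}(Y)+1$ on $\ncn_Z$, the shift absorbed in the comparison relations) is correct as written.

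There is, however, one concrete gap in the perturbation step. Proposition \ref{p8} gives selectors $S_n(Y)$ that are dense in the unit \emph{sphere} $S_Y$, whereas the nodes of $\mathbf{T}(Y,Z,(z_n),\delta)$ are arbitrary tuples $(y_0,\dots,y_k)$ that are $\delta$-equivalent to $(z_0,\dots,z_k)$; such $y_i$ only satisfy $\delta^{-1}\leq\|y_i\|\leq\delta$, so when $\|y_i\|$ is far from $1$ there simply is no $n_i$ with $\|S_{n_i}(Y)-y_i\|<\eta_i$, and the monotone map $\mathbf{T}(Y,Z,(z_n),\delta)\to T_{\delta+\eta}(Y)$ you need for the inequality $\phi_{\ncn_Z}(Y)\leq\sup_\delta o(T_\delta(Y))$ is not defined. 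You cannot evade this by first replacing $y_i$ with $y_i/\|y_i\|$: that amounts to a diagonal multiplier with entries in $[\delta^{-1},\delta]$ acting on the coefficients with respect to $(z_n)$, and for a conditional basis (e.g.\ the summing basis of $c_0$) such multipliers are not uniformly bounded, so the normalized tuple need not be $\delta'$-equivalent to $(z_n)$ for any $\delta'$ depending only on $\delta$ and the basis constant. Since Theorem \ref{t9} is stated for an arbitrary Schauder basis, this matters. The repair is easy and is what Bossard's coding actually does: work with a Borel sequence of selectors whose values are dense in all of $Y$ --- for instance the countable family $q\,S_n(Y)$, $q\in\mathbb{Q}$, $n\in\nn$ (together with $0$), re-enumerated --- define $T_\delta(Y)$ with these, and then your recursive choice of approximants within $\sum_i\eta_i$ small relative to $\eta$, $\delta$ and the basis constant of $(z_n)$ (using $|a_i|\leq 2K_Z\|\sum_j a_j z_j\|$) does give the required monotone maps in both directions and hence the equality $\phi_{\ncn_Z}(Y)=\sup_{k} o(T_{\delta_k}(Y))$. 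With that change (and the trivial separate treatment of $Y=\{0\}$, where Proposition \ref{p8} says nothing about the selectors), the rest of your argument goes through.
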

We will also need the following quantitative strengthening of the
classical fact that $\ell_1$ has the lifting property.
\begin{lem} \label{l10}
Let $X$ and $Y$ be separable Banach spaces and assume that $X$ is a quotient
of $Y$. Then $\phi_{\ncn_{\ell_1}}(X)\leq \phi_{\ncn_{\ell_1}}(Y)$.
\end{lem}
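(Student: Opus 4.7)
The plan is to construct, for every $\delta\geq 1$, a monotone map from the Bourgain tree $\mathbf{T}(X,\ell_1,(z_n),\delta)$ into $\mathbf{T}(Y,\ell_1,(z_n),\delta')$ for an appropriate $\delta'=\delta'(\delta)$. The comparison principle for monotone maps into well-founded trees, recorded in \S 2, will then yield $o(\mathbf{T}(X,\ell_1,(z_n),\delta))\leq o(\mathbf{T}(Y,\ell_1,(z_n),\delta'))$, and taking the supremum over $\delta$ will deliver the lemma.

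If $\phi_{\ncn_{\ell_1}}(Y)=\omega_1$ there is nothing to prove, so assume $Y\in\ncn_{\ell_1}$. Let $Q:Y\to X$ be a bounded linear surjection. By the open mapping theorem there exists a constant $C\geq 1$ such that every $x\in X$ admits some $y\in Y$ with $Qy=x$ and $\|y\|\leq C\|x\|$. Using the axiom of choice, pick any function $T:X\to Y$ satisfying $QT(x)=x$ and $\|T(x)\|\leq C\|x\|$ for every $x\in X$; $T$ need not be linear or continuous. The key feature is that $T$ assigns a \emph{fixed} lift to each point $x$, independent of any finite sequence in which $x$ appears, and this is what will make the following map trivially monotone.

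Set $\delta'=\delta\cdot\max(C,\|Q\|)$ and define
$\psi_\delta:\mathbf{T}(X,\ell_1,(z_n),\delta)\to\mathbf{T}(Y,\ell_1,(z_n),\delta')$ by
\[ \psi_\delta(x_0,\dots,x_k)=(T(x_0),\dots,T(x_k)). \]
Monotonicity of $\psi_\delta$ is immediate from the pointwise definition. The only substantive point to check is that $\psi_\delta$ actually lands in the prescribed $\delta'$-tree: for any scalars $a_0,\dots,a_k$, the upper $\ell_1$-estimate follows from $\|T(x_n)\|\leq C\|x_n\|\leq C\delta$ and the triangle inequality, while the lower estimate is obtained by applying $Q$,
\[ \Big\|\sum_{n=0}^k a_n T(x_n)\Big\|\geq \frac{1}{\|Q\|}\Big\|\sum_{n=0}^k a_n x_n\Big\|\geq \frac{1}{\|Q\|\delta}\sum_{n=0}^k |a_n|. \]
Combining these with the comparison principle yields $o(\mathbf{T}(X,\ell_1,(z_n),\delta))\leq o(\mathbf{T}(Y,\ell_1,(z_n),\delta'))\leq \phi_{\ncn_{\ell_1}}(Y)$, and taking the supremum over $\delta\geq 1$ completes the argument. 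There is no real obstacle: the heart of the proof is the observation that a pointwise (rather than tree-consistent) lift suffices, reducing everything to the two $\ell_1$-estimates above.
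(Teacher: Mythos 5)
Your proposal is correct and follows essentially the same route as the paper's proof: both fix a pointwise $C$-bounded lift of the quotient map, apply it coordinatewise to get a (trivially monotone) map from $\mathbf{T}(X,\ell_1,(z_n),\delta)$ into $\mathbf{T}(Y,\ell_1,(z_n),C\delta)$ via the same two $\ell_1$-estimates, and conclude by comparing tree orders and taking the supremum over $\delta$. The only cosmetic difference is that the paper absorbs $\|Q\|$ and the lifting constant into a single constant $C$, where you write $\max(C,\|Q\|)$.
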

\begin{proof}
Clearly we may assume that $Y$ does not contain a copy of $\ell_1$.
We fix a quotient map $Q:Y\to X$. There exists a constant $C\geq 1$ such that
\begin{enumerate}
\item[(a)] $\|Q\|\leq C$ and
\item[(b)] for every $x\in X$ there exists $y\in Y$ with $Q(y)=x$ and
$\|y\|\leq C \|x\|$.
\end{enumerate}
For every $x\in X$ we select $y_x\in Y$ such that $Q(y_x)=x$ and
$\|y_x\|\leq C\|x\|$. We define a map $\psi:X^{<\nn}\to Y^{<\nn}$
as follows. We set $\psi(\varnothing)=\varnothing$.
If $s=(x_n)_{n=0}^k\in X^{<\nn}\setminus \{\varnothing\}$, then we set
\[ \psi(s)=(y_{x_n})_{n=0}^k. \]
We notice that the map $\psi$ is monotone. Denote by $(z_n)$ the standard unit
vector basis of $\ell_1$.
\begin{claim} \label{c11}
For every $\delta\geq 1$ if $s\in\mathbf{T}(X,\ell_1, (z_n),\delta)$,
then $\psi(s)\in \mathbf{T}(Y,\ell_1, (z_n),C\delta)$.
\end{claim}
Granting Claim \ref{c11}, the proof of the lemma is completed. Indeed,
by Claim \ref{c11}, we have that for every $\delta\geq 1$ the map $\psi$
is a monotone map from the tree $\mathbf{T}(X,\ell_1, (z_n),\delta)$ into
the tree $\mathbf{T}(Y,\ell_1, (z_n),C\delta)$. Hence
\[ o\big(\mathbf{T}(X,\ell_1, (z_n),\delta)\big) \leq
o\big(\mathbf{T}(Y,\ell_1, (z_n),C\delta)\big). \]
The above estimate clearly implies that
$\phi_{\ncn_{\ell_1}}(X)\leq \phi_{\ncn_{\ell_1}}(Y)$.

It remains to prove Claim \ref{c11}. To this end let $s=(x_n)_{n=0}^k
\in \mathbf{T}(X,\ell_1, (z_n),\delta)$. Also let $a_0,..., a_k\in\rr$
be arbitrary. Notice that
\[ Q(a_0 y_{x_0}+... +a_k y_{x_k})= a_0 x_0+ ... + a_k x_k.\]
Hence, by (a), we get
\begin{equation} \label{e2}
\frac{1}{\delta} \sum_{n=0}^k |a_n| \leq \big\| \sum_{n=0}^k a_n x_n\big\|=
\big\| Q\Big( \sum_{n=0}^k a_n y_{x_n}\Big) \big\| \leq
C \big\| \sum_{n=0}^k a_n y_{x_n} \big\|.
\end{equation}
Observe that $\|x_n\|\leq\delta$ for every $n\in\{0,...,k\}$. Therefore,
\begin{equation} \label{e3}
\big\| \sum_{n=0}^k a_n y_{x_n} \big\| \leq
\sum_{n=0}^k |a_n| \cdot \|y_{x_n}\|=
\sum_{n=0}^k |a_n| \cdot \|Q(x_n)\| \leq C\delta \sum_{n=0}^k |a_n|.
\end{equation}
Since the coefficients $a_0, ..., a_k\in\rr$ were arbitrary, inequalities
(\ref{e2}) and (\ref{e3}) imply that $\psi(s)=(y_{x_n})_{n=0}^k\in \mathbf{T}(Y,\ell_1,
(z_n),C\delta)$. This completes the proof of Claim \ref{c11}, and as we have
indicated above, the proof of the lemma is also completed.
\end{proof}

\subsection{Separable spaces with the B. A. P. and Lusky's Theorem}

By the results in \cite{JRZ} and \cite{Pe2}, a separable Banach space $X$
has the bounded approximation property (in short B. A. P.) if and only if
$X$ is isomorphic to a complemented subspace of a Banach space $Y$ with a
Schauder basis. W. Lusky found an effective way to produce the space $Y$.
To state his result we need, first, to recall the definition of the space
$C_0$ due to W. B. Johnson. Let $(F_n)$ be a sequence of finite-dimensional
spaces dense in the Banach-Mazur distance in the class of all finite-dimensional
spaces. We set
\begin{equation} \label{e4}
C_0=\Big( \sum_{n\in\nn} \oplus F_n \Big)_{c_0}
\end{equation}
and we notice that $C_0$ is hereditarily $c_0$ (i.e. every infinite-dimensional
subspace of $C_0$ contains a copy of $c_0$). We can now state Lusky's Theorem
(see \cite{Lu}).
\begin{thm} \label{t12}
Let $X$ be a separable Banach space with the bounded approximation property.
Then $X\oplus C_0$ has a Schauder basis.
\end{thm}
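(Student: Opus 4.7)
The plan is to construct a Schauder basis of $X\oplus C_0$ via an intermediate finite-dimensional decomposition (FDD), using BAP to produce finite-rank approximating operators on $X$ and using the universal property of $C_0$ (density of $(F_n)$ in Banach--Mazur distance) to place their finite-dimensional ranges as uniformly complemented pieces of $C_0$.

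First I would fix a sequence $(T_n)$ of finite-rank operators on $X$ with $\sup_n\|T_n\|\le\lambda$ and $T_n x\to x$ for every $x\in X$. A routine cleanup---taking $T_0=0$, passing to a subsequence, and modifying by perturbations of vanishing norm---yields nested finite-dimensional ranges $E_n:=T_n(X)\subseteq E_{n+1}$ with $\|T_{n+1}T_n-T_n\|$ as small as desired at each step. One should not hope to enforce $T_{n+1}T_n=T_n$ exactly: that would force the differences $E_n\ominus E_{n-1}$ to constitute an FDD of $X$ itself, which not every space with BAP admits.

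Next I would use density of $(F_k)$ in the Banach--Mazur distance to choose pairwise distinct indices $(k_n)$ with $d_{\mathrm{BM}}(E_n,F_{k_n})\to 1$, together with isomorphisms $V_n:E_n\to F_{k_n}$ of uniformly bounded distortion. The subspace $\widetilde{C}:=\bigl(\sum_n\oplus F_{k_n}\bigr)_{c_0}$ is $1$-complemented in $C_0$ by coordinate projection, with complement isomorphic to $C_0$ itself. In the resulting decomposition $X\oplus C_0\cong X\oplus\widetilde{C}\oplus C_0$ I would then define subspaces $G_n$ combining the graph of a ``corrected'' map between the difference subspace $E_n\ominus E_{n-1}$ on the $X$-side and the block $F_{k_n}$ on the $\widetilde{C}$-side, together with the $n$-th block of the residual copy of $C_0$. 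The uniform bound on the partial sum projections for $(G_n)$, which is what turns this decomposition into a genuine FDD, would come from $\sup_n\|T_n\|\le\lambda$ on the $X$-factor, from $\sup_n\|V_n\|\cdot\|V_n^{-1}\|<\infty$, and from the monotone $c_0$-decomposition on the $\widetilde{C}$-factor.

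Having an FDD $(G_n)$ of $X\oplus C_0$ with finite-dimensional summands, I would finish by choosing a monotone basis in each $G_n$ and concatenating in order; the partial sums of this basis interleave those of the FDD, so uniform boundedness transfers. The main obstacle throughout is the FDD construction itself: producing genuine finite-dimensional summands with uniformly bounded partial sum projections requires carefully blending the only approximately nested $X$-side data, which lacks actual projections, with the exactly decomposed $\widetilde{C}$-side data, and this is precisely where the flexibility of $C_0$ --- both as an absorber of $c_0$-sums of finite-dimensional spaces and as a reservoir of uniformly complemented copies of every finite-dimensional space --- is indispensable.
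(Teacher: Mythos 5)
First, note that the paper does not prove this statement at all: Theorem \ref{t12} is quoted from Lusky \cite{Lu}, so there is no in-paper proof to compare with, and your sketch has to be judged as a proof on its own. Judged that way, the decisive gap is your last step. From an FDD $(G_n)$ of $X\oplus C_0$ you cannot ``choose a monotone basis in each $G_n$ and concatenate'': a finite-dimensional space need not admit a monotone basis, and by the work of Gluskin and Szarek the best basis constant of an $n$-dimensional space can grow like $\sqrt{n}$, so the blocks you build (whose $X$-parts are essentially arbitrary finite-dimensional spaces of the form ``$E_n\ominus E_{n-1}$'') have no uniform bound on their basis constants; concatenation gives a basis of the whole space only when such a uniform bound is available. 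Indeed ``FDD $\Rightarrow$ basis'' is false in general (Szarek produced a space with an FDD and no Schauder basis), and this is precisely where the content of Lusky's theorem lies: the $C_0$-blocks must be used not merely to manufacture an FDD but to make each block uniformly well based, via a JRZ/Pe{\l}czy\'{n}ski-type lemma about pairs $E\subseteq \ell_\infty^N$ (or $E\subseteq F_{k}$), which your sketch never invokes or proves.

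The middle step is also a statement of intent rather than an argument. You say you ``would define subspaces $G_n$ combining the graph of a corrected map'' and that the uniform bound on partial-sum projections ``would come from'' $\sup_n\|T_n\|\leq\lambda$, $\sup_n\|V_n\|\|V_n^{-1}\|<\infty$ and the monotone $c_0$-decomposition; but there are no uniformly bounded projections of $X$ onto $E_n$, nor onto any complement of $E_{n-1}$ inside $E_n$, and overcoming exactly this absence is what the corrected-graph construction has to do. Nothing in the sketch shows that your $G_n$ are linearly independent with dense span in $X\oplus\widetilde{C}\oplus C_0$, or that the resulting partial-sum maps are bounded --- you yourself flag this as ``the main obstacle'' and then leave it unresolved, whereas it (together with the block-basis issue above) is the theorem. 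Two smaller, fixable points: exact nestedness of the ranges $T_n(X)$ cannot simply be arranged by ``perturbations of vanishing norm'' (work instead with $E_n=T_0(X)+\cdots+T_n(X)$ and use that $T_m\to I$ uniformly on each finite-dimensional $E_n$), and the claim that the complement of $\widetilde{C}$ in $C_0$ is again isomorphic to $C_0$ requires either choosing the indices $k_n$ so that the remaining $F_j$ stay dense in the Banach--Mazur sense or a separate Pe{\l}czy\'{n}ski-decomposition/absorption argument.
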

Theorem \ref{t12} will be used in the following parameterized form.
\begin{lem} \label{l13}
Let $Z$ be a minimal\footnote[3]{We recall that an infinite-dimensional
Banach space $Z$ is said to be minimal if every infinite-dimensional
subspace of $Z$ contains an isomorphic copy of $Z$; e.g. the classical
sequence spaces $c_0$ and $\ell_p$ $(1\leq p<+\infty)$ are minimal spaces.}
Banach space not containing a copy of $c_0$. Let $\aaa$ be an analytic subset
of $\ncn_Z\cap\ncn_{\ell_1}$. Then there exists a (possibly empty) subset
$\ddd$ of $\ncn_Z\cap\ncn_{\ell_1}$ with the following properties.
\begin{enumerate}
\item[(i)] The set $\ddd$ is analytic.
\item[(ii)] Every $Y\in\ddd$ has a Schauder basis.
\item[(iii)] For every $X\in\aaa$ with the bounded approximation property there
exists $Y\in\ddd$ such that $X$ is isomorphic to a complemented subspace of $Y$.
\end{enumerate}
\end{lem}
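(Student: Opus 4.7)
The natural candidate, dictated by Lusky's Theorem \ref{t12}, is to let $\ddd$ collect all $Y \in \sbs$ that admit a Schauder basis and are isomorphic to $X \oplus C_0$ for some $X \in \aaa$. To put this in analytic form I would fix a linear homeomorphism $\iota : C(2^\nn) \oplus C(2^\nn) \to C(2^\nn)$ and an element $Y_0 \in \sbs$ isometric to $C_0$, and define a Borel map $\Phi : \sbs \to \sbs$ by $\Phi(X) = \iota(X \oplus Y_0)$; Borelness follows from Proposition \ref{p8} by concatenating the dense sequences of $S_X$ and $S_{Y_0}$. Setting
\[
\ddd = \{Y \in \sbs : Y \text{ has a Schauder basis and } Y \cong \Phi(X) \text{ for some } X \in \aaa\},
\]
property (i) holds because ``$Y$ has a Schauder basis'' is $\Sigma^1_1$ (quantify a candidate $(y_n) \in C(2^\nn)^\nn$; the conditions $y_n \in Y$, the $K$-basic inequality for a fixed constant, and density of the rational span in $Y$ via the selectors of Proposition \ref{p8} are all Borel), and isomorphism is an analytic relation on $\sbs \times \sbs$. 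Property (ii) holds by construction; (iii) is immediate from Lusky, since for $X \in \aaa$ with the B.A.P.\ the space $X \oplus C_0$ has a Schauder basis, so any $Y \in \sbs$ isomorphic to it belongs to $\ddd$ and has $X$ as a complemented subspace.

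The substantive step is to check $\ddd \subseteq \ncn_Z \cap \ncn_{\ell_1}$, which reduces to showing that $X \oplus C_0$ avoids $Z$ and $\ell_1$ whenever $X$ does. Since $C_0$ is hereditarily $c_0$ and neither $Z$ nor $\ell_1$ contains $c_0$, we have $C_0 \in \ncn_Z \cap \ncn_{\ell_1}$. The $\ell_1$ case for the direct sum follows from the classical consequence of Rosenthal's theorem that if $\ell_1 \hookrightarrow E \oplus F$ then $\ell_1$ embeds into one of $E, F$. For $Z$, suppose $W \subseteq X \oplus C_0$ is isomorphic to $Z$, and let $P$ be the projection onto $X$. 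A dichotomy on $P|_W$ closes the case: if $P|_W$ is not strictly singular, some infinite-dimensional $W' \subseteq W$ carries $P$ as an isomorphism, and the minimality of $Z$ yields $Z \hookrightarrow W' \hookrightarrow X$, contradicting $X \in \ncn_Z$; if $P|_W$ is strictly singular, a small-perturbation argument produces infinite-dimensional $W' \subseteq W$ on which $I - P$ is an isomorphism into $C_0$, and minimality again gives $Z \hookrightarrow C_0$, whence $c_0 \hookrightarrow Z$, contrary to hypothesis.

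The minimality/strictly-singular dichotomy is the conceptual heart of the argument. The remaining work (the Borel bookkeeping presenting $\ddd$ as analytic via the Kuratowski--Tarski algorithm, and the invocation of Lusky for (ii) and (iii)) is routine, and I do not expect serious obstacles beyond ensuring that $\Phi$ and the isomorphism relation compose into an analytic formula without appealing to any unstated structure of $X \in \aaa$.
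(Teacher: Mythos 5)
Your proposal is correct and takes essentially the same route as the paper: both let $\ddd$ consist of spaces isomorphic to $X\oplus C_0$ for $X\in\aaa$ and invoke Lusky's Theorem \ref{t12}, the only cosmetic difference being that the paper cuts down to the (analytic) set of $X\in\aaa$ with the B.\,A.\,P.\ rather than imposing the analytic condition ``$Y$ has a Schauder basis'' on $Y$ directly. Your strictly-singular dichotomy verifying $X\oplus C_0\in\ncn_Z\cap\ncn_{\ell_1}$ is sound and simply fills in a step the paper leaves implicit in its admittedly sketchy proof.
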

\begin{proof}
The result is essentially known, and so, we will be rather sketchy. First
we consider the set $\bbb\subseteq\sbs$ defined by
\[ X\in\bbb\Leftrightarrow X \text{ has the bounded approximation property.} \]
Using the characterization of B. A. P. mentioned above, it is easy to check
that the set $\bbb$ is analytic. Next, consider the set
$\ccc\subseteq \sbs\times\sbs$ defined by
\[ (X,Y)\in\ccc \Leftrightarrow Y \text{ is isomorphic to } X\oplus C_0.\]
It is also easy to see that $\ccc$ is analytic (see \cite{AD} for more details).
Define $\ddd\subseteq \sbs$ by the rule
\[ Y\in\ddd\Leftrightarrow \exists X \ [X\in \aaa\cap\bbb \text{ and }
(X,Y)\in\ccc]\]
and notice that $\ddd$ is analytic. By Theorem \ref{t12}, the set
$\ddd$ is as desired.
\end{proof}

\subsection{Amalgamated spaces}

A recurrent theme in the proof of various universality results found
in the literature (a theme that goes back to the classical results of
A. Pe{\l}czy\'{n}ski \cite{Pe1}) is the use at a certain point of a
``gluing" procedure. A number of different ``gluing" procedures have
been proposed by several authors. We will need the following result
(see \cite[Theorem 71]{AD}).
\begin{thm} \label{t14}
Let $1<p<+\infty$ and $\ccc$ be an analytic subset of $\sbs$ such that
every $Y\in\ccc$ has a Schauder basis. Then there exists a Banach space
$V$ with a Schauder basis that contains a complemented copy of every
space in the class $\ccc$.

Moreover, if $W$ is an infinite-dimensional subspace of $V$, then either
\begin{enumerate}
\item[(i)] $W$ contains a copy of $\ell_p$, or
\item[(ii)] there exists $Y_0,..., Y_n$ in the class $\ccc$ such that $W$
is isomorphic to a subspace of $Y_0\oplus ... \oplus Y_n$.
\end{enumerate}
\end{thm}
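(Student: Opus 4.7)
The plan is to implement a Schauder-tree amalgamation parameterized uniformly over the class $\ccc$, and then to extract the subspace dichotomy via a Ramsey-theoretic argument based on Theorem \ref{t4}.

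First I would parameterize $\ccc$. Being analytic, $\ccc$ is a continuous image of $\nn^\nn$ via some $\sg\mapsto Y_\sg$. Since the property ``$X$ has a Schauder basis with basis constant at most $K$'' is Borel in $X$ (as one sees using the Borel selectors $(S_n)$ of Proposition \ref{p8} together with standard tests of linear independence and denseness), after a change of Polish topology and a Borel selection I may assume there is a uniform $K\geq 1$ and Borel maps $\sg\mapsto y_n^\sg$ such that $(y_n^\sg)_{n\in\nn}$ is a normalized $K$-Schauder basis of $Y_\sg$ for every $\sg\in\nn^\nn$.

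Next, set $T=\nn^{<\nn}\setminus\{\varnothing\}$ and, for each $\sg\in\nn^\nn$, identify $y_n^\sg$ with the node $\sg|(n+1)\in T$. On $c_{00}(T)$ define
\[ \|x\| = \sup\Big\{\Big(\sum_{\sg\in F}\Big\|\sum_{n}x(\sg|(n+1))\,y_n^\sg\Big\|_{Y_\sg}^p\Big)^{1/p} : F\subseteq\nn^\nn \text{ finite}\Big\},\]
and let $V$ be the completion. Under any enumeration of $T$ that refines $\sqsubset$ (say, depth-first lexicographic), the unit vectors $(e_t)_{t\in T}$ form a Schauder basis of $V$ of basis constant at most $K$: for each finite $F\subseteq\nn^\nn$, the partial sum of $x$ restricted to a branch $\sg\in F$ is itself a partial sum of $(y_n^\sg)$ inside the $K$-Schauder basis of $Y_\sg$. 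For every $\sg_0\in\nn^\nn$, restricting the supremum to $F=\{\sg_0\}$ shows that $\sum_n a_n y_n^{\sg_0}\mapsto\sum_n a_n e_{\sg_0|(n+1)}$ is an isometry $Y_{\sg_0}\hookrightarrow V$, while the map $P_{\sg_0}\colon V\to V$ that kills every $e_t$ with $t\not\sqsubset\sg_0$ is of norm one (by the same monotonicity of the norm under deletion of branches from $F$). Hence every $Y\in\ccc$ sits in $V$ as a norm-one complemented subspace.

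The main obstacle is the subspace dichotomy. Given an infinite-dimensional $W\leq V$, a standard perturbation argument reduces to the case where $W$ contains a normalized block basic sequence $(w_k)$ of $(e_t)$ with successive finite supports $F_k\subseteq T$. Coding $\nn^\nn$ inside a dyadic subtree of $\ct$, the property ``a chain of $\ct$ is a branch of $T$ meeting the union $\bigcup_k F_k$ infinitely often'' becomes an analytic subset of $[\ct]_{\chains}$, and by Theorem \ref{t4} one may pass to a dyadic subtree on which this property is either always true or always false. In the first case, a finite iteration of Stern's theorem exhibits finitely many branches $\sg_0,\ldots,\sg_n$ that essentially support the $F_k$ (along a subsequence), so that $[w_k]$ embeds into $Y_{\sg_0}\oplus\cdots\oplus Y_{\sg_n}$ via $P_{\sg_0}\oplus\cdots\oplus P_{\sg_n}$, yielding (ii). In the second case, one extracts a subsequence of $(w_k)$ whose supports lie on pairwise disjoint branches of $T$; the $p$-sum structure of the norm then forces the subsequence to be $C$-equivalent to the unit vector basis of $\ell_p$, with $C$ depending only on $K$, yielding (i). The delicate point throughout is to choose the correct analytic coding so that Theorem \ref{t4} delivers precisely this dichotomy; once that is set up, the $\ell_p$ lower estimate in case (i) is immediate from the definition of the norm on $V$.
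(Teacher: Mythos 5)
The paper does not actually prove Theorem \ref{t14}; it is quoted as background from \cite[Theorem 71]{AD}, so your sketch must be judged against the amalgamation construction there. Your construction breaks down at the very first step: the expression you propose is not a norm on $c_{00}(T)$ because it is infinite on most vectors. In your supremum the finite sets $F$ consist of whole branches, and distinct branches overlap; for the unit vector $e_t$ at a node $t$ of length one, taking $F=\{\sigma_1,\dots,\sigma_k\}$ to be $k$ distinct branches through $t$ gives the value $k^{1/p}$, so $\|e_t\|=\infty$. The correct definition (as in \cite{AD}) takes the supremum over finite families of \emph{pairwise disjoint segments} of the tree, each segment evaluated in the norm of a branch space containing it; disjointness is exactly what prevents this double counting and is also what makes the branch projections norm one. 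The non-uniformity of basis constants over the analytic class is a lesser issue (one can code the pair (space, basis) into the tree, or split $\ccc$ into the Borel pieces with constant at most $K$ and take an $\ell_p$-sum of the resulting spaces), but it cannot be dismissed by ``a change of Polish topology and a Borel selection'': Borel selections of bases need not exist, and the standard route is to parameterize the analytic set of pairs $(Y,(y_n))$ rather than to select a basis for each $Y$.

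The second and more serious gap is the subspace dichotomy, which is the real content of the theorem; the appeal to Stern's theorem (Theorem \ref{t4}) does not deliver it. Stern's theorem is a coloring statement about infinite chains of a dyadic tree, whereas what is needed is quantitative: given a normalized block sequence $(w_k)$ in $V$, one must show that either, after passing to a subsequence, a uniformly non-negligible part of each $w_k$ is captured by a fixed finite set of branches and the corresponding sum of branch projections is an isomorphism on $[w_k]$ (case (ii)), or the $w_k$ can be refined and perturbed to become segment-disjoint, in which case the disjoint-segment norm yields both $\ell_p$-estimates (case (i)). Knowing merely that every chain (or no chain) of some dyadic subtree meets $\bigcup_k \mathrm{supp}(w_k)$ infinitely often carries no norm information: in the first horn it produces neither the finitely many branches nor the lower bound needed for $P_{\sigma_0}\oplus\cdots\oplus P_{\sigma_n}$ to be an isomorphism, and in the second horn your premise that the supports lie on pairwise disjoint branches is in general unattainable for block sequences in a tree basis, since distinct blocks typically contain comparable nodes along common branches. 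In \cite{AD} this step is a lengthy analysis of block and weakly null sequences in the $\ell_p$-Baire sum, splitting each vector into a part living near finitely many branches and a remainder; no Ramsey theorem for trees is used, and your sketch leaves precisely this part unproved.
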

The space $V$ obtained above is called in \cite{AD} as the
$p$-\textit{amalgamation space} of the class $\ccc$. The reader can find
in \cite[\S 8]{AD} an extensive study of its properties.

%-------------------Quotients of Banach spaces--------------------%

\section{Quotients of Banach spaces}

\subsection{Definitions}

We start with the following.
\begin{defn} \label{d15}
Let $X$ be a separable Banach space and $(x_n)$ be a sequence (with possible
repetitions) in the unit sphere of $X$ which is norm dense in $S_X$. By $E_X$
we shall denote the completion of $c_{00}(\nn)$ under the norm
\begin{equation} \label{e5}
\|z\|_{E_X}= \sup\Big\{ \big\|\sum_{n=0}^m z(n) x_n\big\|_X: m\in\nn\Big\}.
\end{equation}
By $(e_n^X)$ we shall denote the standard Hamel basis of $c_{00}(\nn)$
regarded as a sequence in $E_X$. If $X=\{0\}$, then by convention we set $E_X=c_0$.
\end{defn}
The construction of the space $E_X$ is somehow ``classical" and the motivation
for the above definition can be traced in the proof of the fact that every
separable Banach space is a quotient of $\ell_1$  (see \cite[page 108]{LT}).
A similar construction was presented by G. Schechtman in \cite{Sch}
for different, though related, purposes.

We isolate two elementary properties of the space $E_X$. First, we observe
that the sequence $(e_n^X)$ defines a normalized monotone Schauder basis of $E_X$.
It is also easy to see that the map $E_X\ni e_n^X\mapsto x_n\in X$ is extended
to a norm-one linear operator. This operator will be denoted as follows.
\begin{defn} \label{d16}
By $Q_X:E_X\to X$ we shall denote the (unique) bounded linear operator
satisfying $Q_X(e^X_n)=x_n$ for every $n\in\nn$.
\end{defn}
Let us make at this point two comments about the above definitions.
Let $(y_n)$ be a basic sequence in a Banach space $Y$ and assume
that the map
\[ \overline{\mathrm{span}}\{y_n:n\in\nn\}\ni y_n\mapsto x_n\in X\]
is extended to a bounded linear operator. Then it is easy to see that
there exists a constant $C\geq 1$ such that the sequence $(e_n^X)$
is $C$-dominated\footnote[4]{We recall that if $(v_n)$ and $(y_n)$ are two
basic sequences in two Banach spaces $V$ and $Y$ respectively, then $(v_n)$ is
said to be $C$-dominated by $(y_n)$ if for every $k\in\nn$ and every
$a_0,...,a_k\in\rr$ we have $\| \sum_{n=0}^k a_n v_n \|_V \leq
C \|\sum_{n=0}^k a_n y_n \|_Y$.} by the sequence $(y_n)$. In other words,
among all basic sequences $(y_n)$ with the property that the
map $\overline{\mathrm{span}}\{y_n:n\in\nn\}\ni y_n\mapsto x_n\in X$ is
extended to a bounded linear operator, the sequence $(e_n^X)$ is the
\textit{minimal} one with respect to domination.

Notice also that the space $E_X$ depends on the choice of the sequence
$(x_n)$. For our purposes, however, the dependence is not important as can
be seen from the simple following observation. Let $(d_n)$ be another
sequence in the unit sphere of $X$ which is norm dense in $S_X$ and
let $E'_X$ be the completion of $c_{00}(\nn)$ under the norm
\[ \|z\|_{E'_X}=\sup\Big\{ \big\|\sum_{n=0}^m z(n) d_n\|_X:
m\in\nn\Big\}.\]
Then it is easy to check that $E_X$ embeds isomorphically into
$E'_X$ and vice versa. Actually, it is possible to modify the
construction to obtain a different space sharing most of the properties
of the space $E_X$ and not depending on the choice of the dense sequence.
We could not find, however, any application of this construction
and since it is involved and conceptually less natural to grasp
we prefer not to bother the reader with it.

The rest of the section is organized as follows. In \S 3.2 we present
some preliminary tools needed for the proof of Theorem \ref{t3}.
The proof of Theorem \ref{t3} is given in \S 3.3 while in \S 3.4
we present some its consequences. Finally, in \S 3.5 we make some
comments.

\subsection{Preliminary tools}

We start by introducing some pieces of notation that will be
used only in this section. Let $F$ and $G$ be two non-empty
finite subsets of $\nn$. We write $F<G$ if $\max F<\min G$.
Let $(e_n)$ be a basic sequence in a Banach space $E$ and
let $v$ be a vector in $\overline{\mathrm{span}}\{e_n:n\in\nn\}$.
There exists a (unique) sequence $(a_n)$ of reals such that
$v=\sum_{n\in\nn} a_n e_n$. The \textit{support} of the vector
$v$, denoted by $\supp(v)$, is defined to be the set
$\{n\in\nn: a_n\neq 0\}$. The \textit{range} of the vector $v$,
denoted by $\range(v)$, is defined to be the minimal interval
of $\nn$ that contains $\supp(v)$.

In what follows $X$ will be a separable Banach space
and $(x_n)$ will be the sequence in $X$ which is used to define the
space $E_X$. The following propositions will be basic tools for the
analysis of the space $E_X$.
\begin{prop} \label{p17}
Let $(v_k)$ be a semi-normalized block sequence of $(e^X_n)$ and assume
that $\|Q_X(v_k)\|_X\leq 2^{-k}$ for every $k\in\nn$. Then the sequence
$(v_k)$ is equivalent to the standard unit vector basis of $c_0$.
\end{prop}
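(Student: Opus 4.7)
The plan is to estimate directly in the defining norm of $E_X$. Write $v_k = \sum_{n\in F_k} a_{k,n} e_n^X$ with $F_0 < F_1 < \cdots$, and let $C \geq 1$ be such that $c \leq \|v_k\|_{E_X}\leq C$ for every $k$. Since $(e_n^X)$ is monotone, for any scalars $(b_k)$ the partial sum estimate $\|b_j v_j\|_{E_X}\leq \|\sum_k b_k v_k\|_{E_X}$ holds, and so the lower bound $c \cdot \sup_k |b_k| \leq \|\sum_k b_k v_k\|_{E_X}$ is immediate; what requires work is the matching upper bound.

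For the upper bound, fix scalars $(b_k)$, set $w = \sum_k b_k v_k$ and fix $m\in\nn$. The key observation is that $w(n) = b_k a_{k,n}$ if $n\in F_k$ and $w(n)=0$ otherwise, so the partial sum $\sum_{n=0}^m w(n) x_n$ decomposes as
\[
\sum_{n=0}^m w(n) x_n \;=\; \sum_{k:\ \max F_k \le m} b_k \Big(\sum_{n\in F_k} a_{k,n} x_n\Big) \;+\; b_{k^*}\sum_{n\in F_{k^*},\, n\le m} a_{k^*,n}x_n,
\]
where $k^*$ (if it exists) is the unique index with $\min F_{k^*}\le m < \max F_{k^*}$. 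By Definition~\ref{d16}, the inner sum of the first term is exactly $Q_X(v_k)$. Therefore
\[
\Big\|\sum_{n=0}^m w(n) x_n\Big\|_X \;\le\; \sum_{k:\ \max F_k\le m} |b_k|\cdot\|Q_X(v_k)\|_X \;+\; |b_{k^*}|\cdot\Big\|\sum_{n\in F_{k^*},\, n\le m} a_{k^*,n}x_n\Big\|_X.
\]

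Now the two summands on the right are controlled by the two hypotheses. The hypothesis $\|Q_X(v_k)\|_X\le 2^{-k}$ gives
\[
\sum_{k:\ \max F_k\le m} |b_k|\cdot\|Q_X(v_k)\|_X \;\le\; \Big(\sup_k |b_k|\Big) \sum_{k=0}^\infty 2^{-k} \;=\; 2\sup_k|b_k|,
\]
while the definition \eqref{e5} of the norm on $E_X$ applied to the vector $v_{k^*}$ gives $\|\sum_{n\in F_{k^*},\,n\le m}a_{k^*,n}x_n\|_X\le \|v_{k^*}\|_{E_X}\le C$, so the second summand is at most $C\sup_k|b_k|$. Taking the supremum over $m$ yields $\|w\|_{E_X}\le (C+2)\sup_k|b_k|$, which combined with the lower bound shows that $(v_k)$ is equivalent to the unit vector basis of $c_0$.

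There is no real obstacle here; the only point requiring a little care is the bookkeeping of the partial-sum decomposition (ensuring that the at-most-one ``incomplete'' block $v_{k^*}$ is isolated from the ``completed'' blocks so that the telescoping into $Q_X(v_k)$-terms is correct). Once that decomposition is written down, the geometric-series trick built into the hypothesis $\|Q_X(v_k)\|_X\le 2^{-k}$ does the rest.
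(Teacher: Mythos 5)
Your proof is correct and follows essentially the same route as the paper: the same decomposition of a partial sum at the cut point into completed blocks, which are summable thanks to the hypothesis $\|Q_X(v_k)\|_X\le 2^{-k}$, plus at most one incomplete block controlled by $\big\|\sum_{n\in F_{k^*},\,n\le m}a_{k^*,n}x_n\big\|_X\le\|v_{k^*}\|_{E_X}\le C$ straight from the definition of the $E_X$-norm; the only difference is that you carry arbitrary scalars $(b_k)$ through the estimate, whereas the paper bounds the subset sums $\|\sum_{i\in F}v_{k_i}\|_{E_X}\le 2+C$ and then relies on the standard fact that a semi-normalized basic sequence with uniformly bounded subset sums is equivalent to the $c_0$-basis. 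One small slip in your lower bound: monotonicity of $(e^X_n)$ only yields $\|b_jv_j\|_{E_X}\le 2\|\sum_k b_kv_k\|_{E_X}$ (the constant $1$ you state would require bimonotonicity, which has not been established), but this merely changes the equivalence constant and does not affect the conclusion.
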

\begin{proof}
We select a constant $C>0$ such that $\|v_k\|_{E_X}\leq C$ for every $k\in\nn$.
Let $F=\{k_0< ...< k_j\}$ be a finite subset of $\nn$. We will show that
\[ \big\| \sum_{i=0}^j v_{k_i} \big\|_{E_X} \leq 2+C.\]
This will finish the proof. To this end we argue as follows. First we set
\begin{enumerate}
\item[(a)] $G_i=\supp(v_{k_i})$ and $m_i=\min G_i$ for every $i\in \{0,..., j\}$.
\end{enumerate}
Let $(a_n)$ be the unique sequence of reals such that
\begin{enumerate}
\item[(b)] $a_n=0$ if $n\notin G_0\cup ...\cup G_j$ and
\item[(c)] $v_{k_i}=\sum_{n\in G_i} a_n e^X_n$ for every $i\in\{0,...,j\}$.
\end{enumerate}
Notice that for every $l\in\{0,...,j\}$ and every $m\in\nn$ with $m\in\range(v_{k_l})$
we have
\begin{equation} \label{e6}
\| \sum_{n=m_l}^m a_n x_n \|_X \leq \|v_{k_l}\|_{E_X} \leq C.
\end{equation}
We select $p\in\nn$ such that
\[ \big\| \sum_{i=0}^j v_{k_i} \big\|_{E_X}=\big\| \sum_{n=0}^p
a_n x_n\big\|_X \]
and we distinguish the following cases.
\medskip

\noindent \textsc{Case 1:} \textit{$p\in \range(v_{k_0})$}.
Using (\ref{e6}), we see that
\[ \big\| \sum_{i=0}^j v_{k_i} \big\|_{E_X} =
\big\| \sum_{n=m_0}^p a_n x_n\big\|_X \leq C.\]
\medskip

\noindent \textsc{Case 2:} \textit{there exists $l\in \{1,...,j\}$ such that
$p\in\range(v_{k_l})$}. Using our hypotheses on the sequence $(v_k)$ and
inequality (\ref{e6}), we get that
\begin{eqnarray*}
\big\| \sum_{i=0}^j v_{k_i} \big\|_{E_X} & = & \big\| \sum_{i=0}^{l-1}
\sum_{n\in G_i} a_n x_n + \sum_{n=m_l}^p a_n x_n \big\|_X \\
& \leq & \sum_{i=0}^{l-1} \big\| \sum_{n\in G_i} a_n x_n \big\|_X  +
\big\| \sum_{n=m_l}^p a_n x_n \big\|_X \\
& = & \sum_{i=0}^{l-1} \|Q_X(v_{k_i})\|_X + \big\| \sum_{n=m_l}^p a_n x_n \big\|_X
\leq 2+C.
\end{eqnarray*}
\medskip

\noindent \textsc{Case 3:} \textit{for every $i\in\{0,...,j\}$ we have that
$p\notin\range(v_{k_i})$}. In this case we notice that there exists $l\in \{0,...,j\}$
such that $\range(v_{k_i})<\{p\}$ for every $i\in \{0,...,l\}$ while $\{p\}<\range(v_{k_i})$
otherwise. Using this observation we see that
\[ \big\| \sum_{i=0}^j v_{k_i} \big\|_{E_X} = \big\| \sum_{i=0}^l \sum_{n\in G_i}
a_n x_n\big\|_X \leq \sum_{i=0}^l \|Q_X(v_{k_i})\|_X \leq 2.\]
\medskip

\noindent The above cases are exhaustive, and so, the proof is completed.
\end{proof}
\begin{prop} \label{p18}
Let $(v_k)$ be a bounded block sequence of $(e^X_n)$. If $(Q_X(v_k))$ is weakly null,
then $(v_k)$ is also weakly null.
\end{prop}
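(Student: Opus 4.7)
The plan is to argue by contradiction. Suppose that $(v_k)$ is not weakly null in $E_X$. Then there exist $f\in E_X^*$, a constant $\delta>0$, and a subsequence of $(v_k)$ (which for convenience I still denote by $(v_k)$) such that $|f(v_k)|\geq\delta$ for every $k$. Since the scalar sequence $(f(v_k))$ is bounded, by the Bolzano--Weierstrass theorem I may pass to a further subsequence and assume that $f(v_k)\to a$ for some $a\in\rr$ with $|a|\geq\delta$.

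The main device is to combine Mazur's theorem with Proposition \ref{p17}. Since $(Q_X(v_k))$ is weakly null in $X$, an iterative application of Mazur's theorem (at step $\ell$ to the tail $(Q_X(v_k))_{k>\max I_{\ell-1}}$) produces successive convex combinations
\[ u_\ell=\sum_{k\in I_\ell}\lambda_k^\ell\, v_k \qquad (\ell\in\nn) \]
with $I_\ell<I_{\ell+1}$ in $\nn$, $\lambda_k^\ell\geq 0$, $\sum_{k\in I_\ell}\lambda_k^\ell=1$, and $\|Q_X(u_\ell)\|_X\leq 2^{-\ell}$. Each $u_\ell$ is then a block vector of $(e_n^X)$, the sequence $(u_\ell)$ is bounded in $E_X$, and since $\min I_\ell\to\infty$ and $f(v_k)\to a$, a direct convex combination estimate gives $f(u_\ell)\to a$.

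It remains to split on the behavior of the norms $\|u_\ell\|_{E_X}$. If $\liminf_\ell \|u_\ell\|_{E_X}=0$, then along a subsequence $\|u_\ell\|_{E_X}\to 0$, so $|f(u_\ell)|\leq\|f\|\cdot\|u_\ell\|_{E_X}\to 0$, contradicting $f(u_\ell)\to a\neq 0$. Otherwise, after passing to a subsequence, $(u_\ell)$ is a semi-normalized block sequence of $(e_n^X)$ that still satisfies $\|Q_X(u_\ell)\|_X\leq 2^{-\ell}$. Proposition \ref{p17} then applies to $(u_\ell)$ and shows that it is equivalent to the unit vector basis of $c_0$; in particular $(u_\ell)$ is weakly null in $E_X$, so $f(u_\ell)\to 0$, and once again this contradicts $f(u_\ell)\to a\neq 0$.

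The conceptual point is that no additional analysis of $E_X^*$ is needed: Proposition \ref{p17} already reduces weak-convergence questions for block sequences with small $Q_X$-image to the $c_0$ template. The only step that requires any attention is the iterated use of Mazur's theorem to achieve both successive supports $I_\ell$ and the geometric decay $\|Q_X(u_\ell)\|_X\leq 2^{-\ell}$, but this is completely routine and presents no real obstacle.
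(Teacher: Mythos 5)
Your proof is correct, and it takes a genuinely different route from the paper's. The paper does not iterate Mazur's theorem; instead it proves an ``unconditional'' refinement (Lemma \ref{l19}, obtained by viewing the span inside $C(2^\nn)$ and replacing the vectors by their absolute values) which produces a \emph{single} convex combination $w=\sum_i\lambda_i v_{k_i}$ with small coefficients and with all subset sums $\|\sum_{i\in F}\lambda_i Q_X(v_{k_i})\|_X$ small, and then estimates $\|w\|_{E_X}\leq \delta/2$ directly from the definition of the norm by a case analysis on where the norming partial sum falls (parallel to the proof of Proposition \ref{p17}), contradicting the lower bound $\delta$ on $\mathrm{conv}\{v_k:k\in\nn\}$. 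You avoid Lemma \ref{l19} altogether: ordinary Mazur applied to successive tails of $(Q_X(v_k))$ gives a bounded block sequence $(u_\ell)$ of convex combinations with $\|Q_X(u_\ell)\|_X\leq 2^{-\ell}$, and the partial-sum analysis is delegated to Proposition \ref{p17} after the harmless dichotomy on $\liminf_\ell\|u_\ell\|_{E_X}$ (and an implicit relabeling, which is fine since $2^{-\ell_j}\leq 2^{-j}$), so that $(u_\ell)$ is weakly null, contradicting $f(u_\ell)\to a\neq 0$. Your route buys economy: Proposition \ref{p17} is reused as a black box instead of its partial-sum argument being repeated, and no strengthened Mazur lemma is needed; the paper cannot do this directly because its single convex combination is not a sequence to which Proposition \ref{p17} applies, so it must control initial subset sums by hand, which plain Mazur does not provide. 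The paper's route, in turn, yields the auxiliary Lemma \ref{l19} and an explicit quantitative estimate on one convex block; both arguments ultimately exploit the same feature of the $E_X$-norm, namely that it is computed from initial partial sums of the $x_n$-expansion.
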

For the proof of Proposition \ref{p18} we will need the following ``unconditional"
version of Mazur's Theorem.
\begin{lem} \label{l19}
Let $(v_k)$ be a weakly null sequence in a Banach space $V$. Then for
every $\ee>0$ there exist $k_0< ...< k_j$ in $\nn$ and
$\lambda_0,...,\lambda_j$ in $\rr_+$ with $\sum_{i=0}^j \lambda_i=1$
and such that
\[ \max\{ \lambda_i: 0\leq i\leq j\}\leq \ee \]
and
\[ \max\Big\{ \big\| \sum_{i\in F} \lambda_i v_{k_i} \big\|:
F\subseteq \{0,...,j\} \Big\} \leq \ee. \]
\end{lem}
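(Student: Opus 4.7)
The plan is to derive Lemma \ref{l19} by iterating Mazur's theorem on successive tails of $(v_k)$. Fix $\ee > 0$ and pick $N\in\nn$ with $1/N \leq \ee$; the convex combination I build will have at least $N$ terms, each of weight at most $1/N$, which takes care of the max-coefficient bound automatically.

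My first move is to construct ``block'' convex combinations $w_0, w_1,\ldots,w_{N-1}$ iteratively. Apply Mazur's theorem to $(v_k)$ to obtain a finite convex combination $w_0 = \sum_{i\in I_0}\mu_{0,i} v_i$ of arbitrarily small norm; then observe that $(v_k)_{k>\max I_0}$ is still weakly null and apply Mazur again, obtaining $w_1$ supported on indices strictly above $I_0$, and iterate $N$ times in this fashion. Merging the blocks and setting $\lambda_k = \mu_{s,k}/N$ whenever $k\in I_s$ yields a convex combination $\sum_k \lambda_k v_k = (1/N)\sum_s w_s$ indexed by the increasing enumeration of $\bigcup_s I_s$, and satisfying $\max_k\lambda_k \leq 1/N \leq \ee$.

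The decisive step is then the bound on an arbitrary subset sum $\|\sum_{k\in F}\lambda_k v_k\|$ for $F\subseteq \bigcup_s I_s$. Partitioning $F = \bigsqcup_s F_s$ with $F_s\subseteq I_s$ and applying the triangle inequality reduces the task to controlling, for each block $s$, the \emph{in-block} partial sum $T_s^{F_s} = \sum_{i\in F_s}\mu_{s,i} v_i$ by $\ee$. This is the main obstacle: a single Mazur application controls only the full block sum $T_s^{I_s} = w_s$, whereas an arbitrary partial sum $T_s^{F_s}$ could a priori have norm as large as $\sup_k\|v_k\|$, which is only known to be finite.

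My plan to overcome this is a nested refinement: rather than constructing each block $w_s$ by a single Mazur call, I build it recursively as itself a uniform average of sub-blocks, each produced by Mazur with a tighter norm bound, and iterate this recursion to depth $d$ of order $\log_N(\sup_k\|v_k\|/\ee)$. Tracking the triangle-inequality losses at each level and choosing the norm bounds at level $\ell$ to decrease like $\ee/N^{\ell}$, the in-block partial sums are forced below $\ee$, and together with the outer-level bound this yields $\|\sum_{k\in F}\lambda_k v_k\| \leq (1/N)\sum_s \|T_s^{F_s}\| \leq \ee$ for every $F$, completing the proof.
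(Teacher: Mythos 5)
There is a genuine gap at the step you yourself flag as decisive. Mazur's theorem bounds only the norm of the \emph{full} convex combination it produces, and that smallness can be entirely due to cancellation; it gives no control whatsoever on sub-sums. For instance, a block $b=\tfrac12 v_{i_1}+\tfrac12 v_{i_2}$ with $v_{i_2}$ nearly equal to $-v_{i_1}$ (which weak nullity does not preclude, even arbitrarily far out in the sequence) can satisfy $\|b\|\leq\delta$ for any prescribed $\delta$, while the sub-sum $\tfrac12 v_{i_1}$ has norm about $\tfrac12\sup_k\|v_k\|$. Consequently the claim that ``choosing the norm bounds at level $\ell$ to decrease like $\ee/N^{\ell}$'' forces the in-block partial sums below $\ee$ is unjustified: the nested refinement never reaches a level at which partial sums are controlled. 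An adversarial $F$ can split \emph{every} bottom-level block (e.g.\ select one index from each), and then your triangle-inequality bookkeeping only yields $\sum_{\text{blocks}} (\text{weight of block})\cdot\sup_k\|v_k\|=\sup_k\|v_k\|$, independently of the depth $d$ and of how small the Mazur bounds are. So the recursion has no base case and the final estimate does not close. (Your device for making the coefficients small --- averaging $N$ Mazur blocks taken from successive tails --- is fine; the problem is solely the subset-sum control.)

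Some genuinely new ingredient beyond convexity and the triangle inequality is needed, and this is exactly what the paper supplies: one may assume $V=\overline{\mathrm{span}}\{v_k:k\in\nn\}\subseteq C(2^\nn)$, where weak nullity is equivalent to boundedness plus pointwise convergence to $0$; hence the sequence $y_k=|v_k|$ is again weakly null, and Mazur applied to $(y_k)$ gives $\lambda_0,\dots,\lambda_j$ with $\|\sum_{i=0}^j\lambda_i y_{k_i}\|_\infty\leq\ee$. The pointwise domination $|\sum_{i\in F}\lambda_i v_{k_i}(t)|\leq\sum_{i=0}^j\lambda_i y_{k_i}(t)$ then bounds \emph{all} subset sums simultaneously --- the ``unconditional'' feature your scheme is missing. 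If you want to repair your argument, you must import some such lattice/absolute-value (or other unconditionality-type) mechanism rather than iterate plain Mazur.
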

\begin{proof}
Clearly we may assume that $V=\overline{\mathrm{span}}\{v_k:k\in\nn\}$,
and so, we may also assume that $V$ is a subspace of $C(2^\nn)$. Therefore,
each $v_k$ is a continuous function on $2^\nn$ and the norm of $V$ is the
usual $\|\cdot\|_\infty$ norm. By Lebesgue's dominated convergence
Theorem, a sequence $(f_k)$ in $C(2^\nn)$ is weakly null if and only if
$(f_k)$ is bounded and pointwise convergent to $0$. Hence, setting $y_k=|v_k|$
for every $k\in\nn$, we see that the sequence $(y_k)$ is weakly null.
Therefore, using Mazur's Theorem, it is possible to find $k_0<...<k_j$
in $\nn$ and $\lambda_0,...,\lambda_j$ in $\rr_+$ with $\sum_{i=0}^j \lambda_i=1$
and such that
\[ \max\{\lambda_i: 0\leq i\leq j\}\leq \ee\]
and $\|\sum_{i=0}^j \lambda_i y_{k_i}\|_\infty\leq\ee$. Noticing that
\[ \max\Big\{ \big\|\sum_{i\in F} \lambda_i v_{k_i} \big\|_\infty : F\subseteq
\{0,...,j\}\Big\} \leq \big\|\sum_{i=0}^j \lambda_i y_{k_i}\big\|_\infty\leq\ee\]
the proof is completed.
\end{proof}
We proceed to the proof of Proposition \ref{p18}.
\begin{proof}[Proof of Proposition \ref{p18}]
We will argue by contradiction. So, assume that the sequence $(Q_X(v_k))$
is weakly null while the sequence $(v_k)$ is not. We select $C\geq 1$ such
that $\|v_k\|_{E_X}\leq C$ for every $k\in\nn$. By passing to a subsequence
of $(v_k)$ if necessary, we find $e^*\in E^*_X$ and $\delta>0$ such that
$e^*(v_k)\geq \delta$ for every $k\in\nn$. This property implies that
\begin{enumerate}
\item[(a)] for every vector $z$ in $\mathrm{conv}\{v_k:k\in\nn\}$
we have $\|z\|_{E_X}\geq \delta$.
\end{enumerate}
We apply Lemma \ref{l19} to the weakly null sequence $(Q_X(v_k))$ and
$\ee=\delta\cdot (4C)^{-1}$ and we find $k_0<...<k_j$ in $\nn$ and
$\lambda_0, ..., \lambda_j$ in $\rr_+$ with $\sum_{i=0}^j \lambda_i=1$
and such that
\begin{equation} \label{e7}
\max\{ \lambda_i: 0\leq i\leq j\}\leq \frac{\delta}{4C}
\end{equation}
and
\begin{equation} \label{e8}
\max\Big\{ \big\| \sum_{i\in F} \lambda_i Q_X(v_{k_i}) \big\|_X:
F\subseteq \{0,...,j\} \Big\} \leq \frac{\delta}{4C}.
\end{equation}
Since $\|v_k\|_{E_X}\leq C$ for every $k\in\nn$, inequality (\ref{e7})
implies that
\begin{enumerate}
\item[(b)] $\|\lambda_i v_{k_i}\|_{E_X}\leq \delta/4$ for every $i\in\{ 0,..., j\}$.
\end{enumerate}
We define
\[ w=\sum_{i=0}^j \lambda_i v_{k_i}\in\mathrm{conv}\{v_k:k\in\nn\}.\]
We will show that $\|w\|_{E_X}\leq\delta/2$. This estimate contradicts
property (a) above.

To this end we will argue as in the proof of Proposition \ref{p17}. First we set
\begin{enumerate}
\item[(c)] $G_i=\supp(v_{k_i})$ and $m_i=\min G_i$ for every $i\in\{0,...,j\}$
\end{enumerate}
and we let $(a_n)$ be the unique sequence of reals such that
\begin{enumerate}
\item[(d)] $a_n=0$ if $n\notin G_0\cup ... \cup G_j$ and
\item[(e)] $\lambda_i v_{k_i}=\sum_{n\in G_i} a_n e^X_n$ for every $i\in \{0,...,j\}$.
\end{enumerate}
Using (b), we see that if $l\in\{0,...,j\}$ and $m\in\nn$ with $m\in\range(v_{k_l})$, then
\begin{equation} \label{e9}
\big\| \sum_{n=m_l}^m a_n x_n \big\|_X \leq \|\lambda_l v_{k_l}\|_{E_X} \leq \frac{\delta}{4}.
\end{equation}
We select $p\in\nn$ such that
\[ \|w\|_{E_X} = \big\| \sum_{n=0}^p a_n x_n \big\|_X \]
and, as in the proof of Proposition \ref{p17}, we consider the following three cases.
\medskip

\noindent \textsc{Case 1:} \textit{$p\in \range(v_{k_0})$}. Using (\ref{e9}),
we see that
\[ \|w\|_{E_X}= \big\| \sum_{n=m_0}^p a_n x_n\big\|_X \leq \frac{\delta}{4}. \]
\medskip

\noindent \textsc{Case 2:} \textit{there exists $l\in \{1,..., j\}$
such that $p\in\range(v_{k_l})$}. In this case the desired estimate will be
obtained combining inequalities (\ref{e8}) and (\ref{e9}). Specifically,
let $F=\{0,..., l-1\}$ and notice that
\begin{eqnarray*}
\|w\|_{E_X} & = & \big\| \sum_{i=0}^{l-1} \sum_{n\in G_i} a_n x_n +
\sum_{n=m_l}^p a_n x_n \big\|_X \\
& \leq & \big\| \sum_{i\in F} \sum_{n\in G_i} a_n x_n \big\|_X
+ \big\| \sum_{n=m_l}^p a_n x_n \big\|_X \\
& = & \big\| \sum_{i\in F} \lambda_i Q_X(v_{k_i}) \big\|_X +
\big\| \sum_{n=m_l}^p a_n x_n \big\|_X \\
& \stackrel{(\ref{e8})}{\leq} & \frac{\delta}{4C} +
\big\| \sum_{n=m_l}^p a_n x_n \big\|_X \stackrel{(\ref{e9})}{\leq}
\frac{\delta}{4C}+ \frac{\delta}{4}\leq \frac{\delta}{2}.
\end{eqnarray*}
\medskip

\noindent \textsc{Case 3:} \textit{for every $i\in\{0,...,j\}$ we have
that $p\notin\range(v_{k_i})$}. In this case we will use only inequality
(\ref{e8}). Indeed, there exists $l\in\{0,...,j\}$
such that $\range(v_{k_i})<\{p\}$ if $i\in\{0,...,l\}$ while
$\{p\}<\range(v_{k_i})$ otherwise. Setting $H=\{0,...,l\}$,
we see that
\[ \|w\|_{E_X} = \big\| \sum_{i=0}^l \sum_{n\in G_i} a_n x_n \big\|_X
= \big\| \sum_{i\in H} \lambda_i Q_X(v_{k_i}) \big\|_X
\stackrel{(\ref{e8})}{\leq} \frac{\delta}{4C}\leq \frac{\delta}{4}. \]
\medskip

\noindent The above cases are exhaustive, and so, $\|w\|_{E_X}\leq \delta/2$.
As we have already pointed out, this estimate yields a contradiction. The proof
is completed.
\end{proof}

\subsection{Proof of Theorem \ref{t3}}

Let $X$ be a separable Banach space. In what follows $(x_n)$ will be the
sequence in $X$ which is used to define the space $E_X$.
\medskip

\noindent (i) It is straightforward.
\medskip

\noindent (ii) We have already noticed that $\|Q_X||=1$. To see that $Q_X$ is
onto, observe that the image of the closed unit ball of $E_X$ under the operator
$Q_X$ contains the set $\{x_n:n\in\nn\}$ and therefore it is dense in the closed
unit ball of $X$.
\medskip

\noindent (iii) Let $Y$ be an infinite-dimensional subspace of $E_X$ and assume
that the operator $Q_X:Y\to X$ is strictly singular. Using a standard sliding hump
argument we find a block subspace $V$ of $E_X$ and a subspace $Y'$ of $Y$ with
$V$ isomorphic to $Y'$ and such that the operator $Q_X:V\to X$ is strictly
singular. Hence, we may select a normalized block sequence $(v_k)$ of $(e_n^X)$
with $v_k\in V$ and $\|Q_X(v_k)\|_X\leq 2^{-k}$ for every $k\in\nn$.
By Proposition \ref{p17}, the sequence $(v_k)$ is equivalent to the standard
unit vector basis of $c_0$ and the result follows.
\medskip

\noindent (iv) This part was essentially observed in \cite{Sch}. We reproduce
the argument for completeness. So, let $(w_k)$ be a normalized basic
sequence in $X$. The sequence $(x_n)$ is dense in the unit sphere of $X$.
Therefore it is possible to select an infinite subset $N=\{n_0< n_1<... \}$
of $\nn$ such that the subsequence $(x_{n_k})$ of $(x_n)$ determined by $N$
is basic and equivalent to $(w_k)$ (see \cite{LT}). Let $K\geq 1$
be the basis constant of $(x_{n_k})$. Let also $(e^X_{n_k})$ be the subsequence
of $(e^X_n)$ determined by $N$. Let $j\in\nn$ and $a_0,...,a_j\in\rr$ be arbitrary
and notice that
\[ \big\| \sum_{k=0}^j a_k x_{n_k} \big\|_X \leq
\big\| \sum_{k=0}^j a_k e^X_{n_k} \big\|_{E_X} =
\max_{0\leq i\leq j} \big\| \sum_{k=0}^i a_k x_{n_k} \big\|_X
\leq  K \big\| \sum_{k=0}^j a_k x_{n_k} \big\|_X. \]
Therefore, the sequence $(x_{n_k})$ is $K$-equivalent to the sequence
$(e^X_{n_k})$ and the result follows.
\medskip

\noindent (v) First we consider the relation $\sss\subseteq C(2^\nn)^\nn\times\sbs$
defined by
\[ \big((y_n),Y\big)\in\sss\Leftrightarrow (\forall n \ y_n\in Y) \text{ and }
\overline{\mathrm{span}}\{y_n:n\in\nn\}=Y.\]
The relation $\sss$ is analytic (see \cite[Lemma 2.6]{Bos2}).
Next, we apply Proposition \ref{p8} and we get a sequence
$S_n:\sbs\to C(2^\nn)$ $(n\in\nn)$ of Borel maps such that for every $X\in\sbs$
with $X\neq\{0\}$ the sequence $(S_n(X))$ is norm dense in the unit
sphere of $X$. Now notice that
\begin{eqnarray*}
(X,Y)\in\eee & \Leftrightarrow & \exists (y_n)\in C(2^\nn)^\nn \text{ with }
\big((y_n),Y\big)\in\sss \text{ and either}\\
& & \Big( X=\{0\} \text{ and } \forall k\in\nn \ \forall a_0,...,a_k\in\mathbb{Q}
\text{ we have}\\
& & \ \  \big\| \sum_{n=0}^k a_n y_n \big\|_\infty = \max_{0\leq n \leq k} |a_n|
\Big) \text{ or}\\
& & \Big( X\neq\{0\} \text{ and } \forall k\in\nn \ \forall a_0,...,a_k\in\mathbb{Q}
\text{ we have}\\
& & \ \ \big\| \sum_{n=0}^k a_n y_n \big\|_\infty = \max_{0\leq m\leq k} \big\|
\sum_{n=0}^m a_n S_n(X)\big\|_\infty \Big).
\end{eqnarray*}
The above formula implies that the set $\eee$ is analytic.
\medskip

\noindent (vi) By part (ii), the space $X$ is a quotient of $E_X$.
Therefore, if $E_X^*$ is separable, then $X^*$ is also separable.
For the converse implication we argue by contradiction. So, assume
that there exists a Banach space $X$ with separable dual such that
$E_X^*$ is non-separable. Our strategy is to show that there exists
a sequence $(w_t)_{t\in\ct}$ in $E_X$ which is topologically
equivalent to the basis of James tree (see Definition \ref{d5})
and is such that its image under the operator $Q_X$ has the
same property; that is, the sequence $(Q_X(w_t))_{t\in\ct}$ will also
be topologically equivalent to the basis of James tree. As we have already
indicated in \S 2.3, this implies that $X^*$ is non-separable and yields
a contradiction.

To this end we argue as follows. First we notice that the space $X$ does
not contain a copy of $\ell_1$. Therefore, by part (iii), the space $E_X$
does not contain a copy of $\ell_1$ either. Hence, we may apply Theorem
\ref{t7} to the space $E_X$ and we get a sequence $(e_t)_{t\in \ct}$ in
$E_X$ which is topologically equivalent to the basis of James tree.
We need to replace the sequence $(e_t)_{t\in\ct}$ with another sequence
having an additional property. Specifically, let us say that
a sequence $(v_t)_{t\in\ct}$ in $E_X$ is a \textit{tree-block} if for
every $\sg\in 2^\nn$ the sequence $(v_{\sg|n})$ is a block sequence
of $(e^X_n)$. Notice that the notion of a tree-block is hereditary
with respect to dyadic subtrees; that is, if $(v_t)_{t\in\ct}$ is a
tree-block and $D=\{d_t:t\in\ct\}$ is a dyadic subtree of $\ct$,
then the sequence $(v_{d_t})_{t\in\ct}$ is also a tree-block.
\begin{claim} \label{c20}
There exists a sequence $(v_t)_{t\in\ct}$ in $E_X$ which
is topologically equivalent to the basis of James tree and a tree-block.
\end{claim}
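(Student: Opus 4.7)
The plan is to begin with the sequence $(e_t)_{t\in\ct}$ in $E_X$ produced by Theorem \ref{t7}, which is applicable because $E_X$ has non-separable dual (our working assumption) and does not contain $\ell_1$ (by part (iii) of Theorem \ref{t3}, inherited from $X$). I would then extract a dyadic subtree $D=\{d_t:t\in\ct\}$ of $\ct$ together with finitely-supported vectors $v_t\in E_X$ satisfying $\|v_t-e_{d_t}\|_{E_X}<2^{-|t|-2}$ and $\supp(v_t)\subseteq(N_{|t|-1},N_{|t|}]$ for a strictly increasing sequence $-1=N_{-1}<N_0<N_1<\cdots$ of integers. The last condition forces $(v_{\sg|n})_n$ to be a block sequence of $(e_n^X)$ for every $\sg\in 2^\nn$, i.e.\ the tree-block property.

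Once this is in place, the JT-equivalence of $(v_t)$ follows from Fact \ref{f6} combined with a perturbation argument. Fact \ref{f6} gives that $(e_{d_t})_{t\in\ct}$ is topologically equivalent to the basis of James tree, and the summability of $\|v_t-e_{d_t}\|_{E_X}$ transfers each clause of Definition \ref{d5} to $(v_t)_{t\in\ct}$: semi-normalization is stable under small norm perturbations; weak nullity on any infinite antichain $A\subseteq\ct$ follows because $(e_{d_t})_{t\in A}$ is weakly null (by Fact \ref{f6}, since $\{d_t:t\in A\}$ is an antichain of $\ct$) and the errors tend to zero in norm as $|t|\to\infty$; and along each branch $\sg$ the sequence $(v_{\sg|n})$ has the same weak*-limit in $E_X^{**}\setminus E_X$ as $(e_{d_{\sg|n}})$, hence distinct limits across distinct branches, because norm-null residuals are weak*-null.

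The main technical work is the inductive construction of $D$ and $(v_t)$ on the level of $\ct$. At stage $n+1$, given the data at level $n$, for each $s$ with $|s|=n$ and each $i\in\{0,1\}$ I seek $d_{s^\con i}\in\ct$ extending $d_s^\con i$ (with $d_{s^\con 0}\prec d_{s^\con 1}$) and a finitely-supported $v_{s^\con i}$ with support in $(N_n,N_{n+1}]$ satisfying the approximation bound. The tail $\|(I-P_{N_{n+1}})(e_{d_{s^\con i}})\|_{E_X}$ can be pushed below $2^{-n-4}$ by choosing $N_{n+1}$ large, because $(e_n^X)$ is a Schauder basis. The head $P_{N_n}(e_{d_{s^\con i}})$, however, is the delicate part and is the main obstacle of the proof. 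To control it I would exploit the weak*-convergence of Definition \ref{d5}(3): for any branch $\tau$ of $\ct$ extending $d_s^\con i$, the finite-rank image $P_{N_n}(e_{\tau|k})$ converges in norm to $P_{N_n}(e^{**}_\tau)\in P_{N_n}(E_X)$. Since $P_{N_n}(E_X)$ is finite-dimensional but uncountably many branches pass through $d_s^\con i$, a pigeonhole/cluster-point argument in the finite-dimensional head space, combined with a passage to a dyadic subtree via Stern's Theorem \ref{t4} applied to a Borel set of chains in $[\ct]_{\chains}$ encoding the block-support condition for a pre-chosen system of finitely-supported approximations of the $(e_t)$, lets us either stabilize the limiting heads to a common vector (which can then be absorbed into an ancestor update of $v_s$) or find extensions $d_{s^\con i}$ along which the head is genuinely small. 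This combinatorial head-control step, in the spirit of the constructions of \cite{ADK1,ADK2}, is the technical heart of the argument; once it is carried out the induction closes and the desired sequence is obtained.
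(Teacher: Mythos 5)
Your overall skeleton is the right one and matches the paper: apply Theorem \ref{t7} to $E_X$ (legitimate, since $E_X$ has no copy of $\ell_1$ by part (iii)), perturb the resulting sequence $(e_t)_{t\in\ct}$ over a dyadic subtree to a tree-block, and transfer the three clauses of Definition \ref{d5} via Fact \ref{f6} and a small-perturbation argument; that last transfer part of your write-up is fine. The gap is precisely in the step you yourself call the technical heart: controlling the head $P_{N_n}(e_{d_{s^{\con}i}})$, where $P_N$ denotes the basis projection of $(e^X_n)$ onto the first coordinates. You try to control it along \emph{branches}: there the heads converge in norm to $P_{N_n}^{**}(e^{**}_\tau)$, which is in general a \emph{nonzero} vector, so going deeper along a chain never makes the head small, and no pigeonhole or Stern-type stabilization over an uncountable family of branch limits produces smallness unless that common limit happens to be $0$. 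Your fallback --- ``stabilize the limiting heads to a common vector and absorb it into an ancestor update of $v_s$'' --- does not close the induction: subtracting a fixed nonzero vector $h$ from the $v_t$'s above a node makes $(v_t)_{t\in A}$ converge weakly to $-h\neq 0$ along infinite antichains $A$, violating clause (2) of Definition \ref{d5}, and it also destroys the norm approximation $\|v_t-e_{d_t}\|\to 0$ on which your perturbation transfer relies. So as written the dichotomy has a branch in which the construction cannot continue.

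The correct (and much simpler) mechanism, which is what the paper's proof uses, is clause (2) itself applied \emph{above} the node you are extending: for every $s\in\ct$ there is an infinite antichain $A$ with $s\sqsubset t$ for all $t\in A$ (e.g.\ $s^{\con}0,\, s^{\con}1^{\con}0,\, s^{\con}1^{\con}1^{\con}0,\dots$), and $(e_t)_{t\in A}$ is weakly null, so the finite-rank heads $P_{N_n}(e_t)$ tend to $0$ in \emph{norm} along $A$. Hence at each inductive step you can directly pick $d_{s^{\con}i}$ above $d_s^{\con}i$ (respecting $d_{s^{\con}0}\prec d_{s^{\con}1}$) with head as small as you wish, then cut the tail using the Schauder basis, obtaining $v_{s^{\con}i}$ supported in $(N_n,N_{n+1}]$ with $\|v_{s^{\con}i}-e_{d_{s^{\con}i}}\|$ summably small. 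With this replacement no appeal to Stern's theorem or to the weak* limits $e^{**}_\tau$ is needed at this stage (Stern's theorem enters only later, in the analogue of Claim \ref{c23}), and the rest of your argument goes through.
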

\begin{proof}[Proof of Claim \ref{c20}]
We select $C\geq 1$ such that $C^{-1}\leq \|e_t\|_{E_X}\leq C$ for every $t\in\ct$.
Let $s\in\ct$ be arbitrary. There exists an infinite antichain $A$ of $\ct$
such that $s\sqsubset t$ for every $t\in A$. The sequence $(e_t)_{t\in\ct}$
is topologically equivalent to the basis of James tree, and so, the
sequence $(e_t)_{t\in A}$ is weakly null. Using this observation, we may
recursively construct a dyadic subtree $R=\{r_t:t\in\ct\}$ of $\ct$ and a
tree-block sequence $(v_t)_{t\in\ct}$ in $E_X$ such that
$\|e_{r_t}-v_t\|_{E_X} \leq (2C)^{-|t|+1}$ for every $t\in\ct$.
Clearly the sequence $(v_t)_{t\in\ct}$ is as desired.
\end{proof}
\begin{claim} \label{c21}
There exist a dyadic subtree $S_0$ of $\ct$ and a constant $\Theta\geq 1$ such that
$\Theta^{-1}\leq \|Q_X(v_t)\|_X\leq \Theta$ for every $t\in S_0$.
\end{claim}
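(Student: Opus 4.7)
The upper bound is automatic: since $(v_t)_{t\in\ct}$ is semi-normalized with some constant $C\ge 1$ (condition (1) of Definition \ref{d5}) and $\|Q_X\|=1$, we have $\|Q_X(v_t)\|_X \leq C$ for every $t \in \ct$. The plan therefore reduces to finding a dyadic subtree on which $\|Q_X(v_t)\|_X$ is bounded away from zero; taking $\Theta=\max(C,k)$ for the eventual lower bound $1/k$ will finish the job.

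My strategy is to apply Stern's partition theorem (Theorem \ref{t4}) to the set
\[ \xxx=\big\{c\in [\ct]_\chains : \inf_{t\in c}\|Q_X(v_t)\|_X=0\big\}, \]
which is Borel since $\xxx=\bigcap_{k\ge 1}\bigcup_{t\,:\,\|Q_X(v_t)\|_X<1/k}\{c:t\in c\}$ and each $\{c:t\in c\}$ is clopen in $2^{\ct}$. This will yield a dyadic subtree $D=\{d_t:t\in\ct\}$ of $\ct$ with $[D]_\chains$ either contained in $\xxx$ or disjoint from it.

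I expect the main obstacle to be ruling out the ``bad'' case $[D]_\chains\subseteq \xxx$. Fix any $\sg\in 2^\nn$. By Fact \ref{f6} the sequence $(v_{d_t})_{t\in\ct}$ is still topologically equivalent to the basis of James tree, so $(v_{d_{\sg|n}})_n$ is weakly Cauchy in $E_X$ with weak* limit in $E_X^{**}\setminus E_X$; in particular no subsequence of it is weakly null. On the other hand, since $(v_t)_{t\in\ct}$ is a tree-block, the chain $(v_{d_{\sg|n}})_n$ is itself a semi-normalized block sequence of $(e^X_n)$, and the assumption $[D]_\chains\subseteq \xxx$ lets us extract a subsequence $(v_{d_{\sg|n_k}})_k$ with $\|Q_X(v_{d_{\sg|n_k}})\|_X\le 2^{-k}$. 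Proposition \ref{p17} then forces this subsequence to be equivalent to the unit vector basis of $c_0$ and therefore weakly null in $E_X$ (equivalence preserves weak convergence and the $c_0$-basis is weakly null), contradicting the previous sentence.

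So we are in the alternative $[D]_\chains\cap \xxx=\varnothing$, which gives $\inf_n \|Q_X(v_{d_{\sg|n}})\|_X>0$ for \emph{every} $\sg\in 2^\nn$, but not yet uniformly in $\sg$. A standard Baire category upgrade will fix this: each set $B_k=\{\sg\in 2^\nn:\|Q_X(v_{d_{\sg|n}})\|_X\ge 1/k \text{ for all }n\in\nn\}$ is closed in $2^\nn$, and the $B_k$'s cover $2^\nn$, so some $B_{k_0}$ contains all $\sg$ extending some fixed $\tau\in 2^{<\nn}$. The ``upper cone'' $S_0=\{d_s:s\in\ct,\ \tau\sqsubseteq s\}$ is a dyadic subtree of $\ct$, via the order-preserving concatenation bijection $u\mapsto \tau\con u$ between $\ct$ and $\{s\in\ct:\tau\sqsubseteq s\}$, which respects both $\sqsubset$ and $\prec$. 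By construction $\|Q_X(v_t)\|_X\ge 1/k_0$ for every $t\in S_0$, so setting $\Theta=\max(C,k_0)$ will give the desired bound.
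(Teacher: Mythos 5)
Your argument is correct, but it reaches the claim by a different uniformization mechanism than the paper. The mathematical core is the same in both proofs: if $\|Q_X(v_t)\|_X$ decays to $0$ along a chain, then (tree-blockness plus semi-normalization) one extracts a semi-normalized block subsequence with $\|Q_X(\cdot)\|_X\leq 2^{-k}$, Proposition \ref{p17} makes it $c_0$-equivalent and hence weakly null, and this contradicts condition (3) of Definition \ref{d5}, which forces non-trivial weak* convergence along branches. The difference is how one passes from this chain-wise fact to a uniform lower bound on a whole dyadic subtree. The paper does it in the most elementary way: it negates the statement ``there is a node $s_0$ and $\theta>0$ with $\|Q_X(v_t)\|_X\geq\theta$ for all $t\sqsupseteq s_0$'' and uses the negation to recursively build a single chain $t_0\sqsubset t_1\sqsubset\cdots$ with $\|Q_X(v_{t_k})\|_X\leq 2^{-k}$, then invokes the core contradiction; no partition theorem and no category argument are needed. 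You instead apply Stern's theorem (Theorem \ref{t4}) to the Borel set of chains with vanishing infimum, rule out the bad alternative by the core contradiction, and then upgrade branch-wise positivity to a cone via a Baire category argument on $2^\nn$; your verification that the resulting cone $\{d_s:\tau\sqsubseteq s\}$ is again a dyadic subtree is correct. Note, though, that your Stern step is actually redundant: the very argument you use to exclude $[D]_{\chains}\subseteq\xxx$ already shows directly that $\inf_n\|Q_X(v_{\sg|n})\|_X>0$ for every branch $\sg$ of $\ct$ itself, after which your closed-cover/Baire step alone yields the cone. So your proof is valid but uses heavier machinery (which the paper reserves for Claim \ref{c23}, where Stern's theorem is genuinely needed); the paper's route is shorter and avoids both Theorem \ref{t4} and the category argument, while yours has the mild advantage of packaging the uniformity as a standard dichotomy-plus-Baire scheme.
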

\begin{proof}[Proof of Claim \ref{c21}]
Let $K\geq 1$ be such that $\|v_t\|_{E_X}\leq K$ for every $t\in\ct$.
We will show that there exist $s_0\in\ct$ and $\theta>0$ such that for every
$t\in \ct$ with $s_0\sqsubseteq t$ we have $\|Q_X(v_t)\|_X\geq \theta$.
In such a case, set $S_0=\{s_0^{\con}t: t\in\ct\}$ and
$\Theta=\max\{\theta^{-1}, K\}$ and notice that $S_0$ and $\Theta$ satisfy
the requirements of the claim.

To find the node $s_0$ and the constant $\theta$
we will argue by contradiction. So, assume that for every $s\in\ct$ and every
$\theta>0$ there exists $t\in\ct$ with $s\sqsubseteq t$ and such that
$\|Q_X(v_t)\|_X\leq \theta$. Hence, we may select a sequence
$(t_k)$ in $\ct$ such that for every
$k\in\nn$ we have
\begin{enumerate}
\item[(a)] $t_k\sqsubset t_{k+1}$ and
\item[(b)] $\|Q_X(v_{t_k})\|_X\leq 2^{-k}$.
\end{enumerate}
By (a) above, the set $\{t_k:k\in\nn\}$ is a chain while, by Claim \ref{c20},
the sequence $(v_t)_{t\in\ct}$ is semi-normalized and a tree block. Therefore,
the sequence $(v_{t_k})$ is a semi-normalized block sequence of $(e^X_n)$.
By Proposition \ref{p17} and (b) above, we see that the sequence $(v_{t_k})$
is equivalent to the standard unit vector basis of $c_0$, and so, it is
weakly null. By Claim \ref{c20}, however, the sequence $(v_t)_{t\in\ct}$
is topologically equivalent to the basis of James tree. Since the set
$\{t_k:k\in\nn\}$ is a chain, the sequence $(v_{t_k})$ must be
non-trivial weak* Cauchy. This yields a contradiction.
\end{proof}
\begin{claim} \label{c22}
There exists a dyadic subtree $S_1$ of $\ct$ with $S_1\subseteq S_0$ and
such that for every infinite chain $\{t_0\sqsubset t_1\sqsubset ...\}$ of $S_1$
the sequence $(Q_X(v_{t_n}))$ is basic.
\end{claim}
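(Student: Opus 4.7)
My plan is to apply Stern's partition theorem (Theorem \ref{t4}) to the set
\[
\xxx = \{(t_n)_{n \in \nn} \in [S_0]_{\chains} : (Q_X(v_{t_n})) \text{ is basic in } X\}.
\]
Writing ``basic with basis constant at most $K$'' as a countable intersection of inequalities on rational coefficients of finite partial sums, and then taking the union over $K \in \nn$, one sees that $\xxx$ is an $F_\sigma$, hence analytic, subset of the Polish space $[S_0]_{\chains}$. Theorem \ref{t4} then produces a dyadic subtree $S_1 \subseteq S_0$ for which either $[S_1]_{\chains} \subseteq \xxx$---the desired conclusion---or $[S_1]_{\chains} \cap \xxx = \varnothing$.

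To discard the second alternative it suffices to exhibit, inside every dyadic subtree $S \subseteq S_0$, at least one chain whose $Q_X$-image is basic. I would build such a chain $(t_n)$ by a Mazur-style recursion. Having selected $t_0 \sqsubset \ldots \sqsubset t_n$ in $S$ with $(Q_X(v_{t_i}))_{i \leq n}$ basic and basis constant at most $\prod_{i < n}(1 + 2\epsilon_i)$ for some fixed summable sequence $(\epsilon_i)$, I fix a finite $\epsilon_n$-net of the unit sphere of the finite-dimensional space $\mathrm{span}\{Q_X(v_{t_i}) : i \leq n\}$, take Hahn--Banach functionals $x_1^*,\ldots,x_N^* \in X^*$ norming the net, and then choose $t_{n+1} \in S$ with $t_n \sqsubset t_{n+1}$ and $|x_j^*(Q_X(v_{t_{n+1}}))| < \eta_n$ for all $j$, where $\eta_n$ is small enough relative to $\epsilon_n$ and the uniform bound $\Theta$ of Claim \ref{c21}. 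A standard perturbation estimate then keeps the basis constant of the extended finite sequence at most $\prod_{i \leq n}(1 + 2\epsilon_i)$, which remains uniformly bounded as $n \to \infty$.

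The crux of the construction is the existence of such a $t_{n+1}$, and it follows from Fact \ref{f6} applied to the subtree of $S$ strictly above $t_n$, together with the weak-to-weak continuity of $Q_X$: these imply that $(Q_X(v_t))$ is weakly null along every infinite antichain of that subtree, so the given $N$ functionals can be made simultaneously small on $Q_X(v_t)$ by going far enough out along such an antichain. Iterating yields an infinite chain $(t_n)$ in $S$ whose $Q_X$-image is basic, producing an element of $\xxx \cap [S]_{\chains}$ and contradicting the second alternative of Stern. The main obstacle I foresee is the quantitative Mazur step---balancing $\eta_n$ against $\epsilon_n$ and $\Theta$ so that the cumulative basis constant remains finite; the rest of the argument is routine tree-combinatorial bookkeeping supplied by Fact \ref{f6} and Theorem \ref{t4}.
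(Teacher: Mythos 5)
Your argument is correct, but it is not the route the paper takes. The paper proves Claim \ref{c22} without any Ramsey-theoretic input: starting from the observation (Claims \ref{c20} and \ref{c21} plus weak-to-weak continuity of $Q_X$) that above every node of $S_0$ there is an infinite antichain along which $(Q_X(v_t))$ is semi-normalized and weakly null, it builds the dyadic subtree $S_1$ directly by a tree-wise Mazur recursion, choosing each new node of $S_1$ far enough out along such an antichain so that its image is almost annihilated by functionals norming the span of the images of all its already-chosen predecessors; this handles every infinite chain of $S_1$ at once (chains that skip nodes are covered because subsequences of basic sequences are basic), and Stern's Theorem \ref{t4} is saved for Claim \ref{c23}, where the relevant set of chains is genuinely non-Borel. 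Your proposal instead invokes Theorem \ref{t4} for the set $\xxx$ of chains with basic image --- correctly computed to be $F_\sigma$, hence analytic, since the increasing enumeration maps $c\mapsto t_n$ are continuous on $[S_0]_{\chains}$ and the lower bound $\Theta^{-1}\leq\|Q_X(v_t)\|_X$ from Claim \ref{c21} lets ``basic'' be expressed by a uniform constant --- and then rules out the negative alternative by producing one good chain inside every dyadic subtree $S\subseteq S_0$ via the usual one-dimensional Mazur selection; this is sound, because the homogeneous tree produced by Stern is itself such an $S$. What your route buys is a simpler recursion (along a single branch rather than a whole dyadic subtree, with no bookkeeping over all chains simultaneously); what it costs is the use of a heavier tool plus a complexity verification that the paper's argument avoids. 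Two small remarks: the appeal to Fact \ref{f6} is unnecessary --- property (2) of Definition \ref{d5} for $(v_t)_{t\in\ct}$ applies directly to any infinite antichain contained in $S$ above $t_n$, and such antichains exist because $S$ is dyadic --- and the quantitative balancing of $\eta_n$ against $\epsilon_n$ and $\Theta$ that you flag is indeed the standard perturbation estimate from \cite{LT}, so it poses no real obstacle.
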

\begin{proof}[Proof of Claim \ref{c22}]
By Claim \ref{c20} and Claim \ref{c21}, we see that for every $s\in S_0$ there
exists an infinite antichain $A$ of $S_0$ with $s\sqsubset t$ for every $t\in A$
and such that the sequence $(Q_X(v_t))_{t\in A}$ is semi-normalized and weakly null.
Using this observation and the classical procedure of Mazur for selecting
basic sequences (see \cite{LT}), the claim follows.
\end{proof}
\begin{claim} \label{c23}
There exists a dyadic subtree $S_2$ of $\ct$ with $S_2\subseteq S_1$ and
such that for every infinite chain $\{t_0\sqsubset t_1\sqsubset ...\}$ of $S_2$
the sequence $(Q_X(v_{t_n}))$ is weak* Cauchy.
\end{claim}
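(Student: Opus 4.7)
The plan is to apply Stern's partition theorem (Theorem \ref{t4}) to a suitable analytic subset of $[S_1]_{\chains}$, namely the collection of chains whose image under $Q_X$ is weak* Cauchy. First I would fix a norm-dense sequence $(x_k^*)$ in $B_{X^*}$, which exists since $X^*$ is separable by the standing hypothesis of part (vi). Given an infinite chain $c \in [S_1]_{\chains}$, one enumerates its elements in $\sqsubset$-increasing order as $c = \{t_0^c \sqsubset t_1^c \sqsubset \ldots\}$, and this enumeration depends in a Borel fashion on $c$. I would then define
\[
\xxx = \Big\{ c \in [S_1]_{\chains} : \big(Q_X(v_{t_n^c})\big)_n \text{ is weak* Cauchy in } X^{**} \Big\}.
\]
Since a bounded sequence $(y_n)$ in $X$ is weak* Cauchy in $X^{**}$ if and only if $(x_k^*(y_n))$ is Cauchy for every $k$, the defining condition of $\xxx$ unfolds to a $\Pi^0_3$ formula, so $\xxx$ is Borel, hence analytic in the Polish space $[S_1]_{\chains}$.

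Next, applying Theorem \ref{t4} to $\xxx$ yields a dyadic subtree $S_2 \subseteq S_1$ such that either $[S_2]_{\chains} \subseteq \xxx$ or $[S_2]_{\chains} \cap \xxx = \varnothing$. I would rule out the second alternative using Rosenthal's $\ell_1$ theorem: since $X^*$ is separable, $X$ contains no copy of $\ell_1$. Fix any infinite chain $c = \{t_0 \sqsubset t_1 \sqsubset \ldots\}$ of $S_2$; by Claim \ref{c21}, the sequence $\big(Q_X(v_{t_n})\big)$ is bounded (by $\Theta$), and so Rosenthal's theorem delivers a subsequence $\big(Q_X(v_{t_{n_k}})\big)$ that is weakly Cauchy in $X$, which is the same as being weak* Cauchy in $X^{**}$. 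But then the subchain $c' = \{t_{n_k} : k \in \nn\}$ belongs to $[S_2]_{\chains} \cap \xxx$, contradicting the assumption that this intersection is empty. Therefore $[S_2]_{\chains} \subseteq \xxx$, which is precisely the content of the claim.

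The only non-routine step is the verification that $\xxx$ is analytic, and this is exactly where the separability of $X^*$ enters in an essential way, through the existence of the countable dense sequence $(x_k^*)$. Once that is in hand, the heart of the argument is the interplay between Stern's Ramsey-type dichotomy and Rosenthal's theorem, together with the stability observation that every infinite subsequence of a chain of $S_2$ is itself an element of $[S_2]_{\chains}$ — this is what prevents the ``empty'' alternative of Stern's theorem from occurring.
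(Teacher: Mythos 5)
Your proposal is correct and takes essentially the same route as the paper: you define the same set $\xxx$ of chains whose image under $Q_X$ is weak* Cauchy, apply Stern's theorem (Theorem \ref{t4}) to it, and exclude the empty alternative via Rosenthal's theorem exactly as the paper does. The only cosmetic difference is the definability step: the paper simply records that $\xxx$ is co-analytic (no separability of $X^*$ needed), while you show it is Borel using a dense sequence in $B_{X^*}$; either suffices for Stern's dichotomy.
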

\begin{proof}[Proof of Claim \ref{c23}]
Let
\[ \xxx=\big\{ c\in [S_1]_{\chains}: \text{the sequence } (Q_X(v_t))_{t\in c}
\text{ is  weak* Cauchy}\big\}. \]
The set $\xxx$ is co-analytic (see \cite{Ste} for more details). Therefore, by Theorem
\ref{t4}, there exists a dyadic subtree $S_2$ of $\ct$ with $S_2\subseteq S_1$ and
such that $[S_2]_{\chains}$ is monochromatic. It is enough to show that
$[S_2]_{\chains}\cap \xxx\neq\varnothing$. Recall that the space $X$ does not
contain a copy of $\ell_1$. Therefore, by Rosenthal's Dichotomy \cite{Ro},
we may find an infinite chain $c$ of $S_2$ such that the sequence
$(Q_X(v_t))_{t\in c}$ is weak* Cauchy and the result follows.
\end{proof}
Let $S_2$ be the dyadic subtree of $\ct$ obtained by Claim \ref{c23}
and let $\{s_t:t\in\ct\}$ be the canonical representation of $S_2$. We are
in the position to define the sequence $(w_t)_{t\in\ct}$ we mentioned in
the beginning of the proof. Specifically, we set
\[ w_t= v_{s_t}\]
for every $t\in\ct$. By Claim \ref{c20} and Fact \ref{f6}, the sequence
$(w_t)_{t\in\ct}$ is topologically equivalent to the basis of James tree
and a tree block. The final claim is the following.
\begin{claim} \label{c24}
The sequence $(Q_X(w_t))_{t\in\ct}$ is topologically equivalent to the
basis of James tree.
\end{claim}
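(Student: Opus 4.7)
The plan is to verify the three clauses of Definition \ref{d5} for $(Q_X(w_t))_{t\in\ct}$, one by one, using the refinements encoded in the subtree $S_2$ together with Propositions \ref{p17} and \ref{p18}. Semi-normalization will follow at once from $\|Q_X\|=1$ (upper bound) and Claim \ref{c21} applied to $s_t\in S_2\subseteq S_0$ (lower bound). For any infinite antichain $A\subseteq \ct$, the set $\{s_t:t\in A\}$ is an infinite antichain of $\ct$ (canonical representations of dyadic subtrees preserve incomparability), so topological equivalence of $(v_t)_{t\in\ct}$ to the basis of James tree gives that $(w_t)_{t\in A}=(v_{s_t})_{t\in A}$ is weakly null in $E_X$; continuity of $Q_X$ then transfers this to $(Q_X(w_t))_{t\in A}$ in $X$. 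Finally, for each branch $\sigma\in 2^\nn$ the chain $(s_{\sigma|n})_n$ lies in $S_2$, so Claim \ref{c23} produces a weak*-limit $x^{**}_\sigma$ of $(Q_X(w_{\sigma|n}))_n$ in $X^{**}$.

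It remains to show that $x^{**}_\sigma\notin X$ for every $\sigma\in 2^\nn$, and that distinct branches yield distinct limits. For the first, I would argue by contradiction: suppose $x^{**}_\sigma=y\in X$, so that $(Q_X(w_{\sigma|n}))_n$ converges weakly to $y$ in $X$. Claim \ref{c22} says this is a basic sequence; Mazur's theorem places $y$ in its closed linear span, and uniqueness of the basis expansion (applied via the biorthogonal functionals, extended to $X^*$ by Hahn--Banach) forces $y=0$. Thus $(Q_X(w_{\sigma|n}))_n$ is weakly null in $X$. But $(w_{\sigma|n})_n$ is a bounded block sequence of $(e^X_n)$ by the tree-block property of $(w_t)_{t\in\ct}$, so Proposition \ref{p18} yields that $(w_{\sigma|n})_n$ is weakly null in $E_X$ as well. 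This forces the weak*-limit $w^{**}_\sigma$ of $(w_{\sigma|n})_n$ in $E_X^{**}$ to equal $0\in E_X$, contradicting $w^{**}_\sigma\in E_X^{**}\setminus E_X$ guaranteed by Claim \ref{c20}.

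The main obstacle will arise in showing that $\sigma\neq\tau$ implies $x^{**}_\sigma\neq x^{**}_\tau$, where I would like to run the same Proposition \ref{p18} strategy on the difference sequence $(w_{\sigma|n}-w_{\tau|n})_n$. The assumption $x^{**}_\sigma=x^{**}_\tau$ makes its $Q_X$-image weakly null in $X$, but the difference sequence itself need not be a block sequence of $(e^X_n)$, because the tree-block property of $(w_t)_{t\in\ct}$ only constrains supports along single branches of $\ct$. To circumvent this I plan to pass inductively to an increasing subsequence $(n_k)_k$: since $\min\supp(w_{\sigma|n})$ and $\min\supp(w_{\tau|n})$ both tend to infinity along their respective branches, I can choose each $n_{k+1}$ large enough that the range of $w_{\sigma|n_{k+1}}-w_{\tau|n_{k+1}}$ lies strictly above that of $w_{\sigma|n_k}-w_{\tau|n_k}$. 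The resulting sequence $(w_{\sigma|n_k}-w_{\tau|n_k})_k$ is then a bounded block sequence, so Proposition \ref{p18} yields weak-nullity in $E_X$; but its weak*-limit in $E_X^{**}$ equals $w^{**}_\sigma-w^{**}_\tau$, which is nonzero by topological equivalence of $(w_t)_{t\in\ct}$ to the basis of James tree, producing the desired contradiction.
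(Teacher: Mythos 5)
Your proposal is correct and follows essentially the same route as the paper: semi-normalization from Claim \ref{c21}, weak nullity along antichains by weak-weak continuity of $Q_X$, weak* limits from Claim \ref{c23}, the exclusion $x^{**}_\sg\notin X$ by combining Claim \ref{c22} with Proposition \ref{p18} against $w^{**}_\sg\in E_X^{**}\setminus E_X$, and distinctness of limits by turning the difference sequence into a block sequence (your common-level subsequence is the same device as the paper's interleaved choice of nodes along the two branches) and applying Proposition \ref{p18} again.
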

\begin{proof}[Proof of Claim \ref{c24}]
By Claim \ref{c21}, the sequence $(Q_X(w_t))_{t\in\ct}$ is semi-normalized.
Notice also that for every infinite antichain $A$ of $\ct$ the sequence
$(Q_X(w_t))_{t\in A}$ is weakly null.

Let $\sg\in 2^\nn$. By Claim \ref{c23}, the sequence $(Q_X(w_{\sg|n}))$
is weak* convergent to an element $x^{**}_\sg\in X^{**}$. First we notice
that $x^{**}_\sg\neq 0$. Indeed, the sequence $(w_{\sg|n})$ is a
semi-normalized block sequence of $(e^X_n)$ which is weak* convergent
to an element $w^{**}_\sg\in E_X^{**}\setminus E_X$. If $x^{**}_\sg=0$,
then by Proposition \ref{p18} we would have that $(w_{\sg|n})$ is weakly
null. Hence $x^{**}_\sg\neq 0$. Next we observe that $x^{**}_\sg\in
X^{**}\setminus X$. Indeed, by Claim \ref{c22}, the sequence $(Q_X(w_{\sg|n}))$
is basic. Therefore, if the sequence $(Q_X(w_{\sg|n}))$ was weakly
convergent to an element $x\in X$, then necessarily we would have that $x=0$.
This possibility, however, is ruled out by the previous reasoning, and so,
$x^{**}_\sg\in X^{**}\setminus X$.

Finally suppose, towards a contradiction, that there exist $\sg,\tau\in 2^\nn$
with $\sg\neq\tau$ and such that $x^{**}_\sg=x^{**}_\tau$. In such a case
it is possible to select two sequences $(s_n)$ and $(t_n)$ in $\ct$ such
that the following are satisfied.
\begin{enumerate}
\item[(a)] $s_n\sqsubset s_{n+1}\sqsubset \sg$ for every $n\in\nn$.
\item[(b)] $t_n\sqsubset t_{n+1}\sqsubset \tau$ for every $n\in\nn$.
\item[(c)] Setting $z_n=w_{s_n}-w_{t_n}$ for every $n\in\nn$, we have that
the sequence $(z_n)$ is a semi-normalized block sequence of $(e^X_n)$.
\end{enumerate}
Our assumption that $x^{**}_\sg=x^{**}_\tau$ reduces to the fact that the
sequence $(Q_X(z_n))$ is weakly null. By (c) above, we may apply Proposition
\ref{p18} to infer that the sequence $(z_n)$ is also weakly null.
Therefore, the sequences $(w_{\sg|n})$ and $(w_{\tau|n})$ are weak*
convergent to the same element of $E^{**}_X$. This contradicts the fact
that the sequence $(w_t)_{t\in\ct}$ is topologically equivalent to
the basis of James tree. The proof is completed.
\end{proof}
As we have already indicated, Claim \ref{c24} yields a contradiction.
This completes the proof of part (vi) of Theorem \ref{t3}, and so,
the entire proof is completed.

\subsection{Consequences}

We isolate below three corollaries of Theorem \ref{t3}. The second one
will be of particular importance in the next section.
\begin{cor} \label{c25}
Let $Z$ be a minimal Banach space not containing a copy of $c_0$. If $X$
is a separable Banach space not containing a copy of $Z$, then $E_X$
does not contain a copy of $Z$ either.
\end{cor}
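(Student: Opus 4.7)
The plan is to argue by contradiction. Suppose $E_X$ contains a subspace $Y$ isomorphic to $Z$. I would examine the restriction $Q_X|_Y : Y \to X$ of the quotient operator and split into two cases depending on whether $Q_X|_Y$ is strictly singular. Property (iii) of Theorem \ref{t3} will handle the strictly singular case, while minimality of $Z$ will handle the complementary case.

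\emph{Case 1: $Q_X|_Y$ is strictly singular.} By part (iii) of Theorem \ref{t3}, $Y$ contains a subspace isomorphic to $c_0$. Since $Y$ is isomorphic to $Z$ and $Z$ is minimal, this $c_0$-subspace, being an infinite-dimensional subspace of a copy of $Z$, must in turn contain an isomorphic copy of $Z$. Hence $c_0$ contains an isomorphic copy of $Z$. But every infinite-dimensional subspace of $c_0$ contains a further copy of $c_0$ (a classical Bessaga--Pe\l czy\'nski-type fact), so $Z$ contains a copy of $c_0$, contradicting the hypothesis on $Z$.

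\emph{Case 2: $Q_X|_Y$ is not strictly singular.} Then there exists an infinite-dimensional subspace $Y' \subseteq Y$ on which $Q_X$ is an isomorphism onto its image. Since $Y \cong Z$ is minimal, $Y'$ (being an infinite-dimensional subspace of a copy of $Z$) contains a subspace $Y''$ isomorphic to $Z$. Then $Q_X(Y'') \subseteq X$ is isomorphic to $Y'' \cong Z$, so $X$ contains a copy of $Z$, contradicting the hypothesis on $X$.

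Both cases yield a contradiction, proving the corollary. The two main inputs are Theorem \ref{t3}(iii) and the minimality of $Z$; the only delicate point, and really the crux, is the invocation in Case 1 of the classical fact that every infinite-dimensional subspace of $c_0$ contains a copy of $c_0$ (to convert ``$c_0$ contains $Z$'' into ``$Z$ contains $c_0$''). Beyond that, the argument is a clean dichotomy, and no additional machinery from the paper is needed.
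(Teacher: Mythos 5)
Your proof is correct and is exactly the dichotomy the paper intends when it states that the corollary ``follows immediately'' from part (iii) of Theorem \ref{t3}: strict singularity of $Q_X$ on the copy of $Z$ produces a $c_0$-subspace, and the non--strictly-singular case transfers a copy of $Z$ into $X$ via minimality. One remark: your Case 1 is more roundabout than necessary --- since $Y$ is isomorphic to $Z$, the $c_0$-subspace of $Y$ immediately yields a copy of $c_0$ inside $Z$, contradicting the hypothesis, so neither minimality nor the classical fact that every infinite-dimensional subspace of $c_0$ contains $c_0$ (which you single out as the crux) is actually needed there.
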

\begin{proof}
Follows immediately by part (iii) of Theorem \ref{t3}.
\end{proof}
\begin{cor} \label{c26}
Let $Z$ be a minimal Banach space not containing a copy of $c_0$ and let $\aaa$ be an
analytic subset of $\ncn_Z\cap \ncn_{\ell_1}$. Then there exists a subset
$\bbb$ of $\ncn_Z\cap\ncn_{\ell_1}$ with the following properties.
\begin{enumerate}
\item[(i)] The set $\bbb$ is analytic.
\item[(ii)] Every $Y\in\bbb$ has a Schauder basis.
\item[(iii)] For every $X\in\aaa$ there exists $Y\in\bbb$ such that $X$
is a quotient of $Y$.
\end{enumerate}
\end{cor}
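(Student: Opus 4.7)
The plan is to take $\bbb$ to be essentially the ``parameterized image'' of $\aaa$ under the assignment $X\mapsto E_X$ provided by Theorem \ref{t3}. More precisely, let $\eee\subseteq\sbs\times\sbs$ be the analytic relation from part (v) of Theorem \ref{t3}, i.e.\ $(X,Y)\in\eee$ iff $Y$ is isometric to $E_X$, and set
\[
\bbb=\big\{Y\in\sbs:\exists X\in\aaa\text{ with }(X,Y)\in\eee\big\}.
\]
Since $\aaa$ is analytic and $\eee$ is analytic, $\bbb$ is the projection of the analytic set $(\aaa\times\sbs)\cap\eee$ onto the second coordinate, hence analytic. This will give (i).

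For (ii), observe that every $Y\in\bbb$ is isometric to $E_X$ for some $X\in\aaa$, and $E_X$ carries a normalized monotone Schauder basis by part (i) of Theorem \ref{t3}; transferring the basis through the isometry, $Y$ has a Schauder basis. For the inclusion $\bbb\subseteq\ncn_Z\cap\ncn_{\ell_1}$, I would invoke Corollary \ref{c25} twice: since $X\in\aaa\subseteq\ncn_Z\cap\ncn_{\ell_1}$, the space $E_X$ contains neither $Z$ (by hypothesis on $Z$ and Corollary \ref{c25}) nor $\ell_1$ (which is itself a minimal space not containing $c_0$, so Corollary \ref{c25} applies with $Z$ replaced by $\ell_1$). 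Both of these properties are isometric invariants, so they pass to $Y$.

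For (iii), fix $X\in\aaa$. Choosing any isometric copy of $E_X$ inside $C(2^\nn)$ (which exists by Banach's classical universality theorem) produces a $Y\in\sbs$ with $(X,Y)\in\eee$, and hence $Y\in\bbb$. By part (ii) of Theorem \ref{t3} there is a norm-one linear surjection $Q_X:E_X\to X$; composing with the isometry $Y\to E_X$ yields a bounded linear surjection $Y\to X$, so $X$ is a quotient of $Y$.

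There is no real obstacle here: essentially every ingredient is handed to us by Theorem \ref{t3} and Corollary \ref{c25}. The only point requiring a moment of care is confirming that the definition of $\bbb$ via the analytic relation $\eee$ gives the right descriptive complexity (which is immediate from the Kuratowski--Tarski algorithm applied to the displayed formula) and that the properties of $E_X$ are invariant under isometry, so that choosing any representative $Y\in\sbs$ isometric to $E_X$ is harmless.
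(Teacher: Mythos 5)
Your proposal is correct and follows essentially the same route as the paper: the paper defines $\bbb$ as exactly this projection of $\aaa$ through the analytic relation $\eee$ of Theorem \ref{t3}(v) and then cites parts (i), (ii) of Theorem \ref{t3} together with Corollary \ref{c25}. Your write-up simply makes explicit the details the paper leaves implicit (applying Corollary \ref{c25} both to $Z$ and to $\ell_1$, and noting that the relevant properties are isometric invariants), all of which is sound.
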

\begin{proof}
Let $\eee$ be the set defined in part (v) of Theorem \ref{t3}. We define
$\bbb\subseteq \sbs$ by the rule
\[ Y\in \bbb \Leftrightarrow \exists X \ [X\in \aaa \text{ and }
(X,Y)\in\eee]. \]
The set $\bbb$ is clearly analytic. Invoking parts (i) and (ii) of Theorem
\ref{t3} and Corollary \ref{c25}, we see that $\bbb$ is as desired.
\end{proof}
\begin{cor} \label{c27}
There exists a map $f:\omega_1\to\omega_1$ such that for every countable
ordinal $\xi$ and every separable Banach space $X$ with $\phi_{\ncn_{\ell_1}}(X)
\leq \xi$ the space $X$ is a quotient of a Banach space $Y$ with a Schauder
basis satisfying $\phi_{\ncn_{\ell_1}}(Y)\leq f(\xi)$.
\end{cor}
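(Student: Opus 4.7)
The plan is to define $f$ levelwise, combining the structural Corollary \ref{c26} with the boundedness property of co-analytic ranks. Fix $\xi<\omega_1$ and set
\[ \aaa_\xi=\{X\in\nell:\phi_{\nell}(X)\leq\xi\}. \]
Since $\phi_{\nell}$ is a co-analytic rank on the co-analytic set $\nell$ (Theorem \ref{t9}), the standard rank-theoretic machinery (see \cite[\S 34]{Kechris}) implies that each sublevel set $\aaa_\xi$ is Borel, and in particular analytic.

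Next I would apply Corollary \ref{c26} with $Z=\ell_1$. The hypotheses are met: $\ell_1$ is minimal and does not contain a copy of $c_0$, and since $\ncn_{\ell_1}\cap\ncn_{\ell_1}=\ncn_{\ell_1}$ we do have $\aaa_\xi\subseteq\ncn_Z\cap\ncn_{\ell_1}$. The corollary then furnishes an analytic set $\bbb_\xi\subseteq\nell$ consisting of spaces each of which has a Schauder basis, with the property that every $X\in\aaa_\xi$ is a quotient of some $Y\in\bbb_\xi$.

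Since $\bbb_\xi\subseteq\nell$ is analytic and $\phi_{\nell}$ is a co-analytic rank on $\nell$, the boundedness principle recalled in \S 2.4 produces
\[ f(\xi):=\sup\{\phi_{\nell}(Y):Y\in\bbb_\xi\}<\omega_1. \]
For any separable Banach space $X$ with $\phi_{\nell}(X)\leq\xi$ we have $X\in\aaa_\xi$, and therefore there exists $Y\in\bbb_\xi$ with $X$ a quotient of $Y$, with $Y$ carrying a Schauder basis, and with $\phi_{\nell}(Y)\leq f(\xi)$; this is precisely the conclusion of Corollary \ref{c27}.

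The only genuinely nontrivial point is the analyticity of the sublevel set $\aaa_\xi$, which is where the co-analytic rank machinery of Theorem \ref{t9} is used essentially; once this is in place the argument is a direct assembly of Corollary \ref{c26} and boundedness. (One could bypass Corollary \ref{c26} and apply Theorem \ref{t3} together with Corollary \ref{c25} to $\aaa_\xi$ directly, but this merely retraces the proof of Corollary \ref{c26}.)
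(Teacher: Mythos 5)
Your proposal is correct and follows essentially the same route as the paper's proof: the paper also fixes $\xi$, notes via Theorem \ref{t9} that the sublevel set $\aaa_\xi$ is Borel (hence analytic), applies Corollary \ref{c26} with $Z=\ell_1$, and defines $f(\xi)$ as the supremum given by boundedness of the co-analytic rank on the resulting analytic class $\bbb$. No gaps; your remark that one could instead invoke Theorem \ref{t3} and Corollary \ref{c25} directly simply unwinds Corollary \ref{c26}, as you say.
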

\begin{proof}
We define the map $f:\omega_1\to\omega_1$ as follows. Fix a countable ordinal
$\xi$ and consider the set
\[ \aaa_\xi=\{ X\in\sbs: \phi_{\ncn_{\ell_1}}(X)\leq \xi\}.\]
By Theorem \ref{t9}, the map $\phi_{\ncn_{\ell_1}}:\ncn_{\ell_1}\to\omega_1$
is a co-analytic rank on $\ncn_{\ell_1}$. Hence the set $\aaa_\xi$ is
analytic (in fact Borel - see \cite{Kechris}). We apply Corollary \ref{c26}
to the space $Z=\ell_1$ and the analytic set $\aaa_\xi$ and we get an
analytic subset $\bbb$ of $\ncn_{\ell_1}$ such that for every $X\in\aaa_\xi$
there exists $Y\in\bbb$ with a Schauder basis and having $X$ as quotient.
By boundedness, there exists a countable ordinal $\zeta$ such that
\[ \sup\{ \phi_{\ncn_{\ell_1}}(Y): Y\in\bbb\} =\zeta. \]
We define $f(\xi)=\zeta$. Clearly the map $f$ is as desired.
\end{proof}

\subsection{Comments}

By a well-known result due to W. J. Davis, T. Fiegel, W. B. Johnson and A.
Pe{\l}czy\'{n}ski \cite{DFJP}, if $X$ is a Banach space with separable dual,
then $X$ is a quotient of a Banach space $V_X$ with a \textit{shrinking}
Schauder basis. By Theorem \ref{t3}, the space $E_X$ has a Schauder basis,
separable dual and admits $X$ as quotient. We point out, however, that the
natural Schauder basis $(e^X_n)$ of $E_X$ is \textit{not} shrinking. On the
other hand, the subspace structure of $E_X$ is very well understood. The space
$V_X$ mentioned above is defined using the interpolation techniques developed
in \cite{DFJP} and it is not clear which are the isomorphic types of its subspaces.

We would also like to make some comments about the proof of the
separability of the dual of $E_X$. As we have already indicated, our strategy
was to construct a sequence $(w_t)_{t\in\ct}$ in $E_X$ which is topologically
equivalent to the basis of James tree and is such that its image under the
operator $Q_X$ has the same property; in other words, the operator
$Q_X$ fixes a copy of this basic object. This kind of reasoning can
be applied to a more general framework. Specifically, let $Y$ and $Z$
be separable Banach spaces and $T:Y\to Z$ be a bounded linear operator.
There are a number of problems in Functional Analysis which boil down
to understand when the dual operator $T^*$ of $T$ has non-separable
range. Using the combinatorial tools developed in \cite{ADK1} and
an analysis similar to the one in the present paper,
it can be shown that if $Y$ does not contain a copy of $\ell_1$, then
the operator $T^*$ has non-separable range if and only if $T$ fixes a
copy of a sequence which is topologically equivalent to the basis of James tree.

%-------------------Proof of the main result---------------------%

\section{Proof of the main result}

In this section we will give the proof of Theorem \ref{t2} stated in the
introduction. The proof will be based on the following, more detailed,
result.
\begin{thm} \label{t28}
Let $Z$ be a minimal Banach space not containing a copy of $c_0$ and
$\aaa$ be an analytic subset of $\ncn_Z\cap \ncn_{\ell_1}$. Then there
exists a Banach space $V\in \ncn_Z\cap \ncn_{\ell_1}$ with a Schauder basis
which is surjectively universal for the class $\aaa$. Moreover, if $X\in\aaa$
has the bounded approximation property, then $X$ is isomorphic to a
complemented subspace of $V$.
\end{thm}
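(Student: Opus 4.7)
The plan is to build $V$ by gluing the members of $\aaa$ (after passing to convenient auxiliary analytic families with Schauder bases) via the $p$-amalgamation construction of Theorem \ref{t14} for a carefully chosen exponent $p$. The assumption that $Z$ does not contain $c_0$ is exactly what allows us to invoke the quotient construction of \S 3, while the minimality of $Z$ (and of $\ell_1$) is what will ensure that the resulting amalgamation space remains inside $\ncn_Z\cap\ncn_{\ell_1}$.

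First, I would apply Corollary \ref{c26} to $\aaa$ to obtain an analytic subset $\bbb\subseteq \ncn_Z\cap\ncn_{\ell_1}$ consisting of spaces with a Schauder basis such that every $X\in\aaa$ is a quotient of some $Y\in\bbb$. In parallel, I would apply Lemma \ref{l13} to $\aaa$ to obtain an analytic subset $\ddd\subseteq \ncn_Z\cap\ncn_{\ell_1}$ consisting of spaces with a Schauder basis such that every $X\in\aaa$ with the bounded approximation property is isomorphic to a complemented subspace of some $Y\in\ddd$. Setting $\ccc=\bbb\cup\ddd$ produces an analytic family of basic spaces, all belonging to $\ncn_Z\cap\ncn_{\ell_1}$, to which the amalgamation theorem can be applied.

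Next, I would choose $p\in(1,+\infty)$ so that $\ell_p$ does not embed into $Z$. This is possible because of the minimality of $Z$: if $\ell_{p_1}$ and $\ell_{p_2}$ both embedded into $Z$ for some distinct $p_1,p_2\in[1,+\infty)$, then, since every infinite-dimensional subspace of $Z$ contains a copy of $Z$, the copy of $\ell_{p_1}$ sitting inside $Z$ would contain a copy of $Z$, whence $Z\hookrightarrow\ell_{p_1}$ and consequently $\ell_{p_2}\hookrightarrow Z\hookrightarrow\ell_{p_1}$, contradicting the classical incomparability of $\ell_{p_1}$ and $\ell_{p_2}$ for $p_1\ne p_2$. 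Thus at most one exponent in $[1,+\infty)$ is forbidden, and the set of admissible $p$ is cofinite in $(1,+\infty)$. Applying Theorem \ref{t14} to $\ccc$ with this $p$ yields a Banach space $V$ with a Schauder basis that contains a complemented copy of every $Y\in\ccc$. The surjective-universality and complementation statements of the theorem then follow by mere composition: for $X\in\aaa$, pick $Y\in\bbb$ with $X$ a quotient of $Y$ and compose the projection $V\twoheadrightarrow Y$ with the quotient map $Y\twoheadrightarrow X$; if $X$ moreover has the bounded approximation property, pick instead $Y\in\ddd$ in which $X$ is complemented.

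The main obstacle is verifying that $V$ itself belongs to $\ncn_Z\cap\ncn_{\ell_1}$. Suppose $W$ were an infinite-dimensional subspace of $V$ isomorphic to $Z$ (respectively to $\ell_1$). By the dichotomy in Theorem \ref{t14}, either $W$ contains a copy of $\ell_p$ or $W$ embeds into a finite direct sum $Y_0\oplus\cdots\oplus Y_n$ of members of $\ccc$. The first alternative is impossible, for it would give $\ell_p\hookrightarrow Z$ (which is ruled out by the choice of $p$) or $\ell_p\hookrightarrow \ell_1$ (impossible for $p>1$). To eliminate the second alternative I would argue by induction on $n$: the projection of $W$ onto $Y_0$ is either strictly singular, in which case a sliding-hump argument produces an isomorphic copy of $W$ inside $Y_1\oplus\cdots\oplus Y_n$, or else an isomorphism on some infinite-dimensional subspace, whence that subspace embeds into $Y_0$; invoking the minimality of $Z$ (respectively of $\ell_1$) one concludes that $Z$ (respectively $\ell_1$) embeds into some $Y_i$, contradicting $Y_i\in\ncn_Z\cap\ncn_{\ell_1}$.
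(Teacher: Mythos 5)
Your proposal is correct and follows essentially the same route as the paper: Corollary \ref{c26} and Lemma \ref{l13} yield the analytic families $\bbb$ and $\ddd$, their union is fed into the $p$-amalgamation of Theorem \ref{t14} with $p$ chosen so that $\ell_p$ does not embed into $Z$, surjective universality and complementation follow by composing with the complemented embeddings, and the dichotomy of Theorem \ref{t14} together with minimality of $Z$ and $\ell_1$ keeps $V$ in $\ncn_Z\cap\ncn_{\ell_1}$. The only differences are that you spell out the choice of $p$ and the reduction from $Y_0\oplus\cdots\oplus Y_n$ to a single summand, which the paper simply asserts.
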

Let us point out that the assumption on the complexity of the set $\aaa$
in Theorem \ref{t28} is optimal. Notice also that if $E$ is any Banach space
with a Schauder basis, then the set of all $X\in\aaa$ which are isomorphic to
a complemented subspace of $E$ is contained in the set of all $X\in\aaa$ having
the bounded approximation property. Therefore, the ``moreover" part of the above
result is optimal too.
\begin{proof}[Proof of Theorem \ref{t28}]
Since $Z$ is minimal, there exists $1<p<+\infty$ such that $Z$ does not contain
a copy of $\ell_p$. We fix such a $p$. We apply Lemma \ref{l13} to the
space $Z$ and the analytic set $\aaa$ and we get a subset $\ddd$ of
$\ncn_Z\cap \ncn_{\ell_1}$ such that the following are satisfied.
\begin{enumerate}
\item[(a)] The set $\ddd$ is analytic.
\item[(b)] Every $Y\in\ddd$ has a Schauder basis.
\item[(c)] For every $X\in\aaa$ with the bounded approximation property there
exists $Y\in\ddd$ such that $X$ is isomorphic to a complemented subspace of $Y$.
\end{enumerate}
Next we apply Corollary \ref{c26} to the space $Z$ and the analytic set $\aaa$
and we get a subset $\bbb$ of $\ncn_Z\cap \ncn_{\ell_1}$ with the following
properties.
\begin{enumerate}
\item[(d)] The set $\bbb$ is analytic.
\item[(e)] Every $Y\in\bbb$ has a Schauder basis.
\item[(f)] For every $X\in\aaa$ there exists $Y\in\bbb$ such that $X$
is a quotient of $Y$.
\end{enumerate}
We set $\ccc=\bbb\cup\ddd$ and we notice that $\ccc\subseteq\ncn_Z\cap \ncn_{\ell_1}$.
By (a) and (d), the set $\ccc$ is analytic while, by (b) and (e), every $Y\in\ccc$
has a Schauder basis. The desired space $V$ is the $p$-amalgamation space of the
class $\ccc$ obtained by Theorem \ref{t14}. It rests to check that $V$ has the
desired properties. Notice, first, that $V$ has a Schauder basis.
\begin{claim} \label{c29}
The space $V$ is surjectively universal for the class $\aaa$.
\end{claim}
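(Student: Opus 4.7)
The plan is to chain two quotient maps together. The claim asserts that every $X\in\aaa$ is a quotient of the amalgamation space $V$, so fix an arbitrary $X\in\aaa$. First I would invoke property (f) of the set $\bbb$ to produce a space $Y\in\bbb$ and a bounded linear surjection $Q:Y\to X$. Since $\bbb\subseteq\ccc$ by construction, this $Y$ belongs to the analytic class $\ccc$ of spaces with Schauder bases that was fed into Theorem \ref{t14}. Hence the ``moreover'' part of Theorem \ref{t14} (or rather its main conclusion) gives us that $V$ contains a complemented isomorphic copy $\widetilde{Y}$ of $Y$, and I let $P:V\to\widetilde{Y}$ be a bounded linear projection and $T:\widetilde{Y}\to Y$ be the isomorphism.

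The second step is simply to assemble these maps. The composition $Q\circ T\circ P:V\to X$ is a bounded linear operator, and it is onto because $P$ is onto $\widetilde Y$, $T$ is an isomorphism between $\widetilde Y$ and $Y$, and $Q$ is a quotient map from $Y$ onto $X$. Thus $X$ is a quotient of $V$, which proves the claim. There is no real obstacle here: the work has already been done in setting up $\bbb$ via Corollary \ref{c26} (to ensure that quotients of $\aaa$-spaces can be realized through spaces with bases while staying inside $\ncn_Z\cap\ncn_{\ell_1}$) and in the amalgamation construction of Theorem \ref{t14} (to realize every member of $\ccc$ as a complemented subspace of a single space with a basis). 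The claim is essentially the payoff of these preparations, using only the elementary fact that a quotient of a quotient is a quotient.
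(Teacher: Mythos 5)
Your proposal is correct and coincides with the paper's own argument: both fix $X\in\aaa$, use property (f) to get a quotient map $Q:Y\to X$ with $Y\in\bbb\subseteq\ccc$, take a complemented copy of $Y$ in $V$ via Theorem \ref{t14} with projection $P$ and isomorphism $T$, and conclude by noting that $Q\circ T\circ P:V\to X$ is onto. No gaps; this is exactly the paper's proof.
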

\begin{proof}[Proof of Claim \ref{c29}]
Let $X\in \aaa$ arbitrary. By (f), there exists a space $Y\in\bbb$ such that
$X$ is a quotient of $Y$. We fix a quotient map $Q:Y\to X$. Next we observe
that the space $V$ contains a complemented copy of $Y$. Therefore, it is
possible to find a subspace $E$ of $V$, a projection $P:V\to E$
and an isomorphism $T:E\to Y$. Let $Q':V\to X$ be the operator defined
by $Q'=Q\circ T \circ P$ and notice that $Q'$ is onto. Hence, $X$
is a quotient of $V$ and the result follows.
\end{proof}
\begin{claim} \label{c30}
We have $V\in \ncn_Z\cap \ncn_{\ell_1}$.
\end{claim}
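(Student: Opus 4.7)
The plan is to prove $V\in\ncn_Z$ and $V\in\ncn_{\ell_1}$ by one uniform argument: both $Z$ and $\ell_1$ are minimal spaces, and every member of $\ccc$ lies in $\ncn_Z\cap\ncn_{\ell_1}$. So let $U\in\{Z,\ell_1\}$ and suppose, towards a contradiction, that $V$ contains an isomorphic copy $W$ of $U$. We feed $W$ into Theorem \ref{t14} and analyze the resulting dichotomy.

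The first alternative of Theorem \ref{t14} is that $W$ contains a copy of $\ell_p$. Since $W\cong U$, this would give $\ell_p\hookrightarrow U$. For $U=Z$ this contradicts the very choice of $p$ (made so that $Z$ does not contain $\ell_p$), while for $U=\ell_1$ it contradicts the classical fact that $\ell_1$ does not contain $\ell_p$ for $p>1$. Thus we are forced into the second alternative: $W$ embeds into some finite direct sum $Y_0\oplus\cdots\oplus Y_n$ with $Y_i\in\ccc$. Fix such an embedding $T\colon U\to Y_0\oplus\cdots\oplus Y_n$ and let $P_i$ denote the canonical projection onto $Y_i$.

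The key observation is that at least one of the compositions $P_i\circ T$ must fail to be strictly singular. Indeed, if every $P_i\circ T$ were strictly singular, then by iteratively passing to infinite-dimensional subspaces (using the definition of strict singularity $n+1$ times with sufficiently small thresholds) we would find an infinite-dimensional subspace $M$ of $U$ on which $\|T|_M\|$ is arbitrarily small, contradicting the fact that $T$ is an isomorphism onto its image. Therefore some $P_i\circ T$ is an isomorphism on an infinite-dimensional subspace $M\subseteq U$. Since $U$ is minimal, $M$ contains an isomorphic copy of $U$, which then embeds into $Y_i$ via $P_i\circ T$. This contradicts $Y_i\in\ncn_Z\cap\ncn_{\ell_1}\subseteq\ncn_U$, completing the argument.

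The main technical point is the iterated-subspace reduction in the middle paragraph; once that is granted, everything else is routine. I expect that part to occupy only a few lines once one spells out the standard fact that a finite sum of strictly singular operators between Banach spaces is itself strictly singular (so that $T=\sum P_i T$ would be strictly singular, contradicting that $T$ is an into isomorphism of an infinite-dimensional space).
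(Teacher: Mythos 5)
Your proposal is correct and follows essentially the same route as the paper: argue by contradiction, rule out the $\ell_p$ alternative of Theorem \ref{t14} (for $Z$ by the choice of $p$, for $\ell_1$ by the classical fact), localize an infinite-dimensional subspace of the copy inside a single $Y_{i_0}\in\ccc$, and conclude via minimality. The only difference is cosmetic: you spell out, via strict singularity of the compositions $P_i\circ T$, the standard localization step that the paper simply asserts, and you treat $Z$ and $\ell_1$ in one uniform argument where the paper says the $\ell_1$ case is identical.
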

\begin{proof}[Proof of Claim \ref{c30}]
We will show that $V$ does not contain a copy of $Z$ (the proof of the
fact that $V$ does not contain a copy of $\ell_1$ is identical). We will
argue by contradiction. So, assume that there exists a subspace $W$
of $V$ which is isomorphic to $Z$. By the choice of $p$, we see that
$W$ does not contain a copy of $\ell_p$. Therefore, by Theorem \ref{t14},
there exist $Y_0,..., Y_n$ in the class $\ccc$ such that $W$ is isomorphic
to a subspace of $Y_0\oplus ...\oplus Y_n$. There exist an infinite-dimensional
subspace $W'$ of $W$ and $i_0\in\{0,...,n\}$ such that $W'$ is isomorphic
to a subspace of $Y_{i_0}$. Since $Z$ is minimal, we get that
$Y_{i_0}$ must contain a copy of $Z$. This contradicts the fact that
$\ccc\subseteq \ncn_Z$, and so, the claim is proved.
\end{proof}
Finally, we notice that if $X\in\aaa$ has the bounded approximation property,
then, by (c) above and Theorem \ref{t14}, the space $X$ is isomorphic to a
complemented subspace of $V$. This shows that the space $V$ has the
desired properties. The proof of Theorem \ref{t28} is completed.
\end{proof}
We proceed to the proof of Theorem \ref{t2}.
\begin{proof}[Proof of Theorem \ref{t2}]
Let $\ccc\subseteq\sbs$.
\medskip

\noindent (i)$\Rightarrow$(ii) Assume that there exists a separable Banach
space $Y$ not containing a copy of $\ell_1$ which is surjectively universal
for the class $\ccc$. The space $Y$ does not contain a copy of $\ell_1$, and
so, $\phi_{\ncn_{\ell_1}}(Y)<\omega_1$. Moreover, every space in the class
$\ccc$ is a quotient of $Y$. Therefore, by Lemma \ref{l10}, we get that
\[ \sup\{\phi_{\ncn_{\ell_1}}(X):X\in\ccc\} \leq \phi_{\ncn_{\ell_1}}(Y) <\omega_1.\]

\noindent (ii)$\Rightarrow$(iii) Let $\xi$ be a countable ordinal such
that $\sup\{\phi_{\ncn_{\ell_1}}(X):X\in\ccc\}=\xi$. By Theorem \ref{t9},
the map $\phi_{\ncn_{\ell_1}}:\ncn_{\ell_1}\to\omega_1$ is a co-analytic
rank on the set $\ncn_{\ell_1}$. It follows that the set
\[ \aaa=\{V\in\sbs: \phi_{\ncn_{\ell_1}}(V)\leq \xi\} \]
is a Borel subset of $\ncn_{\ell_1}$ (see \cite{Kechris}) and clearly
$\ccc\subseteq \aaa$.
\medskip

\noindent (iii)$\Rightarrow$(i) Assume that there exists an analytic subset
$\aaa$ of $\ncn_{\ell_1}$ with $\ccc\subseteq \aaa$. We apply Theorem
\ref{t28} for $Z=\ell_1$ and the class $\aaa$ and we get a Banach space
$V$ with a Schauder basis which does not contain a copy of $\ell_1$ and is
surjectively universal for the class $\aaa$. A fortiori, the space $V$
is surjectively universal for the class $\ccc$ and the result follows.
\end{proof}

%--------------Related results -- open problems----------------%

\section{A related result and open problems}

Let us recall the following notion (see \cite[Definition 90]{AD}).
\begin{defn} \label{d31}
A class $\ccc\subseteq\sbs$ is said to be \emph{strongly bounded}
if for every analytic subset $\aaa$ of $\ccc$ there exists $Y\in\ccc$
which is universal for the class $\aaa$.
\end{defn}
This is a quite strong structural property. It turned out, however, that
many natural classes of separable Banach spaces are strongly bounded.

Part of the research in this paper grew out from our attempt
to find natural instances of the ``dual" phenomenon. The ``dual"
phenomenon is described in abstract form in the following definition.
\begin{defn} \label{d32}
A class $\ccc\subseteq\sbs$ is said to be \emph{surjectively strongly bounded}
if for every analytic subset $\aaa$ of $\ccc$ there exists $Y\in\ccc$ which is
surjectively universal for the class $\aaa$.
\end{defn}
So, according to Definition \ref{d32}, Theorem \ref{t28} has the following consequence.
\begin{cor} \label{c33}
Let $Z$ be a minimal Banach space not containing a copy of $c_0$. Then
the class $\ncn_{\ell_1}\cap \ncn_{Z}$ is surjectively strongly bounded.
\end{cor}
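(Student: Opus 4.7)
The plan is to observe that Corollary \ref{c33} is an essentially immediate consequence of Theorem \ref{t28}, once one unpacks Definition \ref{d32}. Fix an arbitrary analytic subset $\aaa$ of $\ccc := \ncn_{\ell_1}\cap\ncn_Z$. Since $\aaa\subseteq\ncn_{\ell_1}\cap\ncn_Z$ and $\aaa$ is analytic, the hypotheses of Theorem \ref{t28} are satisfied verbatim, and that theorem supplies a Banach space $V\in\ncn_Z\cap\ncn_{\ell_1}$ (with the additional property, not needed here, that $V$ has a Schauder basis) such that every $X\in\aaa$ is a quotient of $V$. Setting $Y:=V$, we obtain an element of $\ccc$ that is surjectively universal for $\aaa$, which is precisely what Definition \ref{d32} asks for.

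Consequently, no new ingredient is required beyond the machinery already developed in Section 4: the entire content of the corollary is a restatement of Theorem \ref{t28} in the abstract language of surjective strong boundedness. For the same reason, there is no genuine obstacle in this argument to flag as hard; all the real work was done earlier in the paper, namely in the construction of the space $E_X$ (Theorem \ref{t3}), in the passage from an analytic family of quotients to an analytic family of spaces with Schauder bases remaining inside $\ncn_{\ell_1}\cap\ncn_Z$ (Corollary \ref{c26}, together with Lemma \ref{l13}), and in the amalgamation procedure of Theorem \ref{t14} that glues these spaces into a single element of $\ncn_Z\cap\ncn_{\ell_1}$. Thus the proof of Corollary \ref{c33} consists of verifying the two hypotheses of Theorem \ref{t28} and reading off its conclusion.
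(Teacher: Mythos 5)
Your proposal is correct and coincides with the paper's own argument: Corollary \ref{c33} is derived simply by applying Theorem \ref{t28} to an arbitrary analytic $\aaa\subseteq\ncn_{\ell_1}\cap\ncn_Z$ and noting that the resulting space $V$ lies in $\ncn_{\ell_1}\cap\ncn_Z$, exactly as Definition \ref{d32} requires.
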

The following proposition provides two more natural examples.
\begin{prop} \label{p34}
The class $\mathrm{REFL}$ of separable reflexive Banach spaces and the class
$\mathrm{SD}$ of Banach spaces with separable dual are surjectively strongly
bounded.
\end{prop}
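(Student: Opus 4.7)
My plan is to adapt the proof of Theorem~\ref{t28} to each of the classes $\refl$ and $\sd$ by keeping the entire construction inside the target class $\ccc$. Concretely, for an analytic $\aaa\subseteq\ccc$, I would produce an analytic family $\bbb\subseteq\ccc$ of Schauder-basis spaces in which each $X\in\aaa$ is a quotient of some $Y\in\bbb$, and then amalgamate $\bbb$ into a single $V\in\ccc$ which still complementedly contains each $Y\in\bbb$; composing a projection $V\twoheadrightarrow Y$ with $Q:Y\twoheadrightarrow X$ would then realise every $X\in\aaa$ as a quotient of $V$.

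For $\ccc=\sd$ the reduction to Schauder-basis spaces is carried out through $E_X$: since $\sd\subseteq\nell$, I would set $\bbb=\{Y:\exists X\in\aaa,\ (X,Y)\in\eee\}$, which is analytic by Theorem~\ref{t3}(v), and by Theorem~\ref{t3}(i), (ii), (vi) each $Y\in\bbb$ is isometric to some $E_X$, has a Schauder basis, admits $X$ as a quotient and (since $X^*$ is separable) has separable dual, so $\bbb\subseteq\sd$. For $\ccc=\refl$ the same $E_X$-reduction is unavailable, because Proposition~\ref{p17} applied to the semi-normalized block sequence $v_k=e^X_{n_k}-e^X_{n'_k}$ (with $n_k<n'_k$ and $\|x_{n_k}-x_{n'_k}\|\leq 2^{-k}$, chosen using density of $(x_n)$ in $S_X$) shows that $E_X$ always contains a copy of $c_0$, so $E_X$ is never reflexive. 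Instead, I would invoke a parameterised version of the Davis--Figiel--Johnson--Pe{\l}czy\'nski interpolation \cite{DFJP}, combined with a Lusky-type adjustment to install a Schauder basis, to produce an analytic family $\bbb\subseteq\refl$ of reflexive Schauder-basis spaces with each $X\in\aaa$ a quotient of some $Y\in\bbb$.

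The amalgamation step would then be an appeal to the $\ccc$-preserving variants of Theorem~\ref{t14} from \cite{AD}: the $p$-amalgamation of an analytic class of Schauder-basis spaces in $\sd$ (respectively $\refl$) is itself in $\sd$ (respectively $\refl$) and still complementedly contains each member. The hard part will be verifying this last assertion, since the dichotomy of Theorem~\ref{t14} controls arbitrary infinite-dimensional subspaces $W\subseteq V$ but its first alternative ($W\supseteq\ell_p$) does not by itself force $W$, let alone $V$, into $\ccc$. One has to exploit the specific Schreier-$\ell_p$-sum structure of the amalgamation and show that every bounded block sequence of the basis of $V$ has a subsequence that is either equivalent to the $\ell_p$-unit-vector-basis or spans a subspace of a finite direct sum of members of $\bbb$; in both cases the subsequence is weakly null -- and in the $\refl$ case a parallel argument produces bounded completeness -- so the basis of $V$ is shrinking (respectively shrinking and boundedly complete), yielding $V\in\sd$ (respectively $V\in\refl$).
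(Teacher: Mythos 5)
Your overall scheme (reduce the analytic class $\aaa$ to an analytic family $\bbb$ of basis spaces inside the target class, then amalgamate class-preservingly) is the right one for $\sd$-type arguments, and your observation that $E_X$ is useless for $\refl$ because it always contains $c_0$ is correct. But both halves, as written, have real problems. For $\refl$: the ``Lusky-type adjustment'' is self-defeating, since $X\oplus C_0$ contains $c_0$ and is therefore never reflexive; you must take the bases directly from the (parameterized) DFJP interpolation spaces, which is exactly what \cite{DFJP} together with \cite{Bos1} provide. (The paper in fact treats $\refl$ by a shorter route: pass to the dual class $\aaa^*=\{Y: Y\simeq X^* \text{ for some } X\in\aaa\}$, which is analytic by \cite{D1} and contained in $\refl$, embed all of $\aaa^*$ into a single reflexive $Z$ using the strong boundedness of $\refl$ from \cite{DF} (or \cite{OSZ}), and observe that every $X\in\aaa$ is then a quotient of the reflexive space $Z^*$.)

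The serious gap is in the $\sd$ half. The ``$\sd$-preserving variant of Theorem \ref{t14}'' you appeal to is not available for classes of spaces with arbitrary bases: the preservation theorems in \cite{AD} require the branch bases to be \emph{shrinking} (or the spaces reflexive), and the general principle is false -- the $\ell_p$-Baire-sum of a Schauder tree basis whose branches are spaces with separable dual but non-shrinking bases can have non-separable dual (this is precisely the James tree phenomenon). Your choice $\bbb=\{E_X: X\in\aaa\}$ runs head-on into this: as the paper itself points out in \S 3.5, the basis $(e^X_n)$ of $E_X$ is never shrinking (compose $Q_X$ with a norm-one functional and use the density of $(x_n)$ in $S_X$), and in the $p$-amalgamation each basis of a member of $\bbb$ reappears isometrically as a subsequence of the basis of $V$; hence the basis of $V$ cannot be shrinking, so your proposed verification collapses. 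The auxiliary inference is also invalid: a bounded block sequence whose span embeds into a finite direct sum of members of $\bbb$ need not be weakly null merely because those members have separable dual (consider the summing basis of $c_0$). The correct route -- and the one the paper takes -- is to bypass $E_X$ altogether for $\sd$: by \cite{DFJP} and \cite{Bos1} there is an analytic family $\bbb$ of spaces with \emph{shrinking} Schauder bases such that every $X\in\aaa$ is a quotient of some $Y\in\bbb$, and then the shrinking-basis-preserving amalgamation of \cite{AD} yields a space $E$ with a shrinking basis (hence in $\sd$) containing each $Y\in\bbb$ complementably, so $E$ is surjectively universal for $\aaa$.
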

Proposition \ref{p34} follows combining a number of results already existing
in the literature, and so instead of giving a formal proof we will only give
a guideline. To see that the class $\mathrm{REFL}$ is surjectively strongly
bounded, let $\aaa$ be an analytic subset of $\mathrm{REFL}$ and consider
the \textit{dual class} $\aaa^*$ of $\aaa$ defined by
\[ Y\in\aaa^* \Leftrightarrow \exists X\in\aaa \text{ with } Y
\text{ isomorphic to } X^*. \]
The set $\aaa^*$ is analytic (see \cite{D1}) and
$\aaa^*\subseteq\mathrm{REFL}$. Since the class $\mathrm{REFL}$ is
strongly bounded (see \cite{DF}), there exists a separable reflexive Banach
space $Z$ which is universal for the class $\aaa^*$. Therefore,
every space $X$ in $\aaa$ is a quotient of $Z^*$. The referee suggested
that, alternatively, one can use the universality results obtained in
\cite{OSZ}.

The argument for the class $\mathrm{SD}$ is somewhat different and uses
the parameterized version of the Davis-Fiegel-Johnson-Pe{\l}czy\'{n}ski
construction due to Bossard, as well as, an idea already employed in the
proof of Theorem \ref{t28}. Specifically, let $\aaa$ be an analytic
subset of $\mathrm{SD}$. By the results in \cite{DFJP} and \cite{Bos1},
there exists an analytic subset $\bbb$ of Banach spaces with a shrinking
Schauder basis such that for every $X\in\aaa$ there exists $Y\in\bbb$
having $X$ as quotient. It is then possible to apply the machinery
developed in \cite{AD} to obtain a Banach space $E$ with a shrinking Schauder
basis that contains a complemented copy of every space in the class $\bbb$.
By the choice of $\bbb$, we see that the space $E$ is surjectively universal
for the class $\aaa$.

Although, by Theorem \ref{t2}, we know that the class $\ncn_{\ell_1}$ is
surjectively strongly bounded, we should point out that it is not
known whether the class $\ncn_{\ell_1}$ is strongly bounded. We close
this section by mentioning the following related problems.
\begin{problem} \label{pr1}
Is it true that every separable Banach space $X$ not containing a copy
of $\ell_1$ embeds into a space $Y$ with a Schauder basis and not
containing a copy of $\ell_1$?
\end{problem}
\begin{problem} \label{pr2}
Does there exist a map $g:\omega_1\to\omega_1$ such that for every
countable ordinal $\xi$ and every separable Banach space $X$ with
$\phi_{\ncn_{\ell_1}}(X)\leq \xi$ the space $X$ embeds into a Banach space
$Y$ with a Schauder basis satisfying $\phi_{\ncn_{\ell_1}}(Y)\leq g(\xi)$?
\end{problem}
\begin{problem} \label{pr3}
Is the class $\ncn_{\ell_1}$ strongly bounded?
\end{problem}
We notice that an affirmative answer to Problem \ref{pr2} can be used to
provide an affirmative answer to Problem \ref{pr3} (to see this combine
Theorem \ref{t9} and Theorem \ref{t14} stated in \S 2).

It seems reasonable to conjecture that the above problems have an affirmative
answer. Our optimism is based on the following facts. Firstly, Problem \ref{pr3}
is known to be true within the category of Banach spaces with a Schauder
basis (see \cite{AD}). Secondly, it is known that for every minimal Banach
space $Z$ not containing a copy $\ell_1$ the class $\ncn_Z$ is strongly
bounded (see \cite{D2}).
\medskip

\noindent \textbf{Acknowledgments.} This work was done during my visit
at Texas A$\&$M University the academic year 2008-2009. I would like
to thank the Department of Mathematics for the financial support and
the members of the Functional Analysis group for providing excellent
working conditions.

%-------------------------------------------------------------------%
%                           Bibliography                            %
%-------------------------------------------------------------------%

\end{document}